\newcommand{\red}[1]{{\color{red} #1}}
\definecolor{purple}{RGB}{208,134,255}
\definecolor{blue}{RGB}{10,120,253}
\definecolor{red}{RGB}{254,144,33}
\definecolor{green}{RGB}{126,198,52}
\definecolor{orange}{RGB}{244,154,33}
\def\aftermath{\par\vspace{-\belowdisplayskip}\vspace{-\parskip}\vspace{-\baselineskip}}
\newtheorem{conj}{Conjecture}
\newtheorem{defi}[conj]{Definition}
\newtheorem{theorem}[conj]{Theorem}
\newtheorem{lem}[conj]{Lemma}
\newtheorem{ques}[conj]{Question}
\newtheorem{cor}[conj]{Corollary}
\newtheorem{prop}[conj]{Proposition}
\newtheorem{ex}[conj]{Example}
\newtheorem{remark}[conj]{Remark}
\newtheorem{claim}[conj]{Claim}
\newtheorem*{ORL}{Optimal Renaming Lemma}
\newtheorem*{proptwo}{Proposition 2}
\newcommand*{\myproofname}{Proof}
\newenvironment{claimproof}[1][\myproofname]{\begin{proof}[#1]}{\end{proof}}
\DeclareMathOperator{\rad}{rad}
\DeclareMathOperator{\diam}{diam}
\renewcommand{\a}{\alpha}
\renewcommand{\b}{\beta}
\def\C{\mathcal{C}}
\newcommand{\abs}[1]{\left|#1\right|}
\newcommand*{\ceilfrac}[2]{\mathopen{}\left\lceil\frac{#1}{#2}\right\rceil\mathclose{}}
\newcommand*{\floorfrac}[2]{\mathopen{}\left\lfloor\frac{#1}{#2}\right\rfloor\mathclose{}}
\title{Sharp Bounds on Lengths of Linear
Recolouring Sequences}
\author{
	Stijn Cambie%
 \thanks{Department of Computer Science, KU Leuven Campus Kulak-Kortrijk, 8500 Kortrijk, Belgium. 
 Supported by a FWO grant with grant number 1225224N. Email: \protect\href{mailto:stijn.cambie@hotmail.com}{\protect\nolinkurl{stijn.cambie@hotmail.com}}.}	
	\and
	Wouter Cames van Batenburg%
	\thanks{
 Delft Institute of Applied Mathematics,
 Delft University of Technology, Netherlands.
		Email: \protect\href{mailto:w.p.s.camesvanbatenburg@gmail.com}{\protect\nolinkurl{w.p.s.camesvanbatenburg@gmail.com}}.}
 \and 
Daniel W. Cranston\thanks{Virginia Commonwealth University, Dept.~of
Computer Science, Richmond, Virginia, USA;
Email: \protect\href{mailto:dcranston@vcu.edu}{\protect\nolinkurl{dcranston@vcu.edu}}.}	
}
\date{}
\begin{document}

\maketitle

\begin{abstract}
A recolouring sequence, between $k$-colourings $\a$ and $\b$ of a graph $G$, transforms $\a$ into $\b$ by recolouring one vertex at a time, such that after each recolouring step we again have a proper $k$-colouring of $G$. The diameter of the $k$-recolouring graph, $\diam \C_k(G)$, is the maximum over all pairs $\a$ and $\b$ of the minimum length of a recolouring sequence from $\a$ to $\b$. Much previous work has focused on determining the asymptotics of $\diam \C_k(G)$: Is it $\Theta(|G|)$? Is it $\Theta(|G|^2)$? Or even larger?
Here we focus on graphs for which $\diam \C_k(G)=\Theta(|G|)$, and seek to determine more precisely the multiplicative constant implicit in the $\Theta()$. In particular, for each $k\ge 3$, for all positive integers $p$ and $q$ we exactly determine $\diam \C_k(K_{p,q})$, up to a small additive constant.
We also sharpen a recolouring lemma that has been used in multiple papers, proving an optimal version. This improves the multiplicative constant in various prior results.
Finally, we investigate plausible relationships between similar reconfiguration graphs.
\end{abstract}

\section{Introduction}
The \emph{$k$-recolouring graph}, $\C_k(G)$, has as its vertices the $k$-colourings of $G$; two vertices are adjacent precisely when those $k$-colourings differ in the colour of a single vertex of $G$. A walk in $\C_k(G)$ between two $k$-colourings $\a$ and $\b$ is called a \emph{recolouring sequence} from $\a$ to $\b$. It is well-known~\cite{jerrum}, and easy to prove by induction, that $\C_k(G)$ is connected when $k\ge \Delta(G)+2$. In fact, in this case the diameter of $\C_k(G)$ is linear in $|G|$; 
specifically~\cite[Thm.~1]{CCC24}, we have $\diam\C_k(G)\le 2|G|$. Much previous research~\cite{BB18,BC25,BCH24,BCM24,BP15,merkel,BFHR22,DF20,DF21,cranston22} has focused on determining sufficient conditions on integers $k$ and graph classes $\mathcal{G}$ to imply that $\diam\C_k(G)=\Theta(|G|)$ for all $G\in\mathcal{G}$.
In this paper we revisit some of these graphs, and prove sharper upper bounds on $\diam \C_k(G)$.

In Section~\ref{opt-sec}, we prove the Optimal Renaming Lemma, which is a sharper version of a lemma used in numerous papers.
As a consequence, we improve the upper bounds in these previous works.

In Section~\ref{Kpq-sec}, 
we consider the complete bipartite graph $K_{p,q}$; for each number $k$ of colours we determine,
up to a small additive constant, $\diam\C_k(K_{p,q})$. We show, perhaps surprisingly, 
that $\diam \C_k(K_{p,q})$ moves through 3 distinct phases, as the ratio $q/p$ decreases.
Namely, we prove our main result, stated below.

\begin{theorem}\label{thr:main_diamCk(Kpq)}
Fix positive integers $p,q,k$. If $p \le q$ and $k\ge 3$, then 
$$\diam\C_k(K_{p,q})=
\left.
 \begin{cases}
 2p+q+\floorfrac{q-p}{k-1} & \text{if } q \ge kp \\ \\
 3p+q+\floorfrac{q-kp}{\floorfrac{k^2}{4} } -g_1(k,p,q) & \text{if } \ceilfrac {k}{2} p \le q \le kp,\\
 \\
 2p+q+\floorfrac{q-p}{\ceilfrac {k}{2} } -g_2(k,p,q)& \text{if } q \le \ceilfrac {k}{2} p, 
 \end{cases}
 \right.$$
 where $0 \le g_1(k,p,q) \le \floorfrac{k^2}{4}$ and $0 \le g_2(k,p,q) \le \floorfrac{k}{2}.$ 
\end{theorem}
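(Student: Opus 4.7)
The plan is to establish matching upper and lower bounds in each of the three regimes by routing between arbitrary colourings $\alpha,\beta$ through carefully chosen canonical intermediate colourings, and by exhibiting adversarial pairs attaining each bound. The central parameter is $a^\star:=|S_A|$, the number of colours reserved for side $A$ in the canonical colouring. In regime~1 ($q\ge kp$) the right choice is $a^\star=1$, which frees the maximum number of colours for the heavily overloaded $B$-side. In regime~3 ($q\le\lceil k/2\rceil p$) the two sides are balanced and we take $a^\star=\lfloor k/2\rfloor$. In the intermediate regime~2 the optimum is driven by the product $a^\star(k-a^\star)\le\lfloor k^2/4\rfloor$, maximised at $a^\star=\lfloor k/2\rfloor$, which explains the appearance of $\lfloor k^2/4\rfloor$ in the floor term.

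For the upper bound I would proceed in three phases. Given $\alpha$ and $\beta$, phase~1 recolours $\alpha$ to a canonical colouring $\gamma_\alpha$ whose $A$-side uses a designated set $S_A^\star$ of size $a^\star$, with $B$ distributed as evenly as possible over the complementary colours. Phase~2 connects $\gamma_\alpha$ to an analogously chosen $\gamma_\beta$ by a cheap shuffle that exploits the high symmetry of $K_{p,q}$ (and in regime~2 will touch $A$-vertices an additional time, accounting for the $3p$ rather than $2p$). Phase~3 mirrors phase~1 in reverse. The base cost is $2p+q$ in regimes~1 and~3 and $3p+q$ in regime~2; the overflow term comes from a pigeonhole distribution of $q$ vertices into $k-a^\star$ classes (in regimes~1 and~3) or into $a^\star(k-a^\star)$ pairs of (A-colour, B-colour) slots (in regime~2), which is precisely where the floor terms come from.

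For the lower bound I would exhibit extremal pairs $(\alpha,\beta)$ for which many colours play incompatible roles: a colour $c$ appears on $A$ in $\alpha$ but on $B$ in $\beta$, so that any recolouring sequence must first evacuate $c$ from $A$ before introducing it on $B$. I would then define a potential $\Phi(\gamma)$ counting unresolved colour migrations, weighted appropriately to pick up the correct floor term in each regime, and argue that $\Phi$ drops by at most $1$ per step while $\Phi(\alpha)-\Phi(\beta)$ matches the claimed lower bound.

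The principal obstacle will be regime~2, since the factor $\lfloor k^2/4\rfloor$ forces a genuinely two-sided double-counting argument that cannot be captured by treating either $A$ or $B$ as the sole bottleneck; the Optimal Renaming Lemma from Section~\ref{opt-sec} should do much of this combinatorial work. Matching the lower and upper bounds only up to the additive errors $g_1\in[0,\lfloor k^2/4\rfloor]$ and $g_2\in[0,\lfloor k/2\rfloor]$ also requires care with the parity of $k$ and with the regime boundaries $q=kp$ and $q=\lceil k/2\rceil p$, which I would absorb into $g_1,g_2$ by a short case analysis.
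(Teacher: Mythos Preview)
What you have written is a plan, not a proof, and several of its load-bearing steps are either unexecuted or point in the wrong direction.

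For the upper bound, routing every pair $(\alpha,\beta)$ through canonical colourings $\gamma_\alpha,\gamma_\beta$ is a plausible heuristic, but you give no argument that the three phases together cost at most the claimed bound; the ``cheap shuffle'' of phase~2 and the ``pigeonhole distribution'' of phases~1 and~3 are asserted, not proved. The paper does \emph{not} argue this way. Its upper bound proceeds by induction on $k$: one defines $C_1=\alpha(V)\cap\beta(V)$, handles $C_1=\emptyset$ directly via an averaging over choices of auxiliary colour sets (Proposition~\ref{prop:caseC_1emptyset}), and for $|C_1|\ge 1$ picks $c\in C_1$ and writes down two explicit recolouring sequences giving $d(\alpha,\beta)\le \diam\C_{k-1}(K_{p,q-y})+y-y_0$ and $d(\alpha,\beta)\le 2p+q+y+y_0$; these are then combined by taking weighted averages across nine subcases indexed by which regime $q$ and $q-y$ fall in. The sharp constants come precisely from choosing the right weights, and your plan contains no analogue of this step. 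Also, the Optimal Renaming Lemma is about two colourings with the \emph{same} colour-class partition; it plays no role whatsoever in the proof of Theorem~\ref{thr:main_diamCk(Kpq)} and will not give you the $\lfloor k^2/4\rfloor$ denominator.

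For the lower bound, your extremal construction is the right instinct and matches the paper's, but the promised potential $\Phi$ is never defined, and ``weighted appropriately to pick up the correct floor term'' is exactly the hard part. The paper's argument is more delicate than a single monotone potential: after setting up the partition into blocks $U_{i,j},V_{i,j}$, it tracks the order in which colours are \emph{erased} from each side, splits into cases according to whether colour $a+1$ is erased first or colour $a$ last, and in the remaining case introduces auxiliary parameters $x,y$ (how many $U$-colours are erased before the first $V$-colour, and how many $V$-colours after the last $U$-colour) to extract an extra $xy\cdot p/(ab)$ term beyond the basic count. Minimising the resulting expression over $(x,y)$ is what produces the $\lfloor k^2/4\rfloor$ in regime~2 and the $\lceil k/2\rceil$ in regime~3. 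None of this structure is visible in your outline, and it is not clear that a potential dropping by $1$ per step can encode it.
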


Finally, in Section~\ref{surprise-sec} we consider list-colouring. For each list-assignment $L$ we define $\C_L(G)$, analogous
to $\C_k(G)$. Naturally, we look for relationships between $\diam\C_L(G)$ and $\diam \C_k(G)$, particularly when $|L(v)|=k$ for 
all vertices $v$. We mainly construct examples disproving many statements we might hope were true.
We also pose as open problems some statements that we have neither been able to prove or disprove, regarding relationships between $\diam \C_k(G)$ and $\diam \C_{k+1}(G)$, or between $\rad \C_k(G)$ and $\rad \C_{k+1}(G)$.

Most of our notation is standard.
We note that we write $[i]$ for $\{1,\ldots,i\}$ and we write $[i,j]$ for $\{i,\ldots,j\}$.

\subsection{Intuition behind \texorpdfstring{Theorem~\ref{thr:main_diamCk(Kpq)}}{Theorem 1}}
To better understand Theorem~\ref{thr:main_diamCk(Kpq)}, it is instructive to compare to each other the upper bounds for its $3$ regimes. 
When $k=3$, these upper bounds
for all regimes are identical; furthermore, in that case the functions $g_1$ and $g_2$ are identically zero. We state this formally below
and prove it just before Subsection~\ref{upper:ssec}.
\begin{prop}
\label{k=3:prop}
 For every complete bipartite graph $K_{p,q}$, we have $\diam \C_3(K_{p,q})= \floorfrac{3(p+q)}{2}.$
\end{prop}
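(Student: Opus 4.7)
The plan is to prove both $\diam \C_3(K_{p,q}) \le \lfloor 3(p+q)/2\rfloor$ and $\diam \C_3(K_{p,q}) \ge \lfloor 3(p+q)/2\rfloor$.

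For the upper bound, I would substitute $k=3$ into each of the three regime formulas of Theorem~\ref{thr:main_diamCk(Kpq)}. A routine computation using $k-1=\lfloor k^2/4\rfloor=\lceil k/2\rceil=2$ shows that each of the expressions $2p+q+\lfloor(q-p)/(k-1)\rfloor$, $3p+q+\lfloor(q-kp)/\lfloor k^2/4\rfloor\rfloor$, and $2p+q+\lfloor(q-p)/\lceil k/2\rceil\rfloor$ collapses to $\lfloor 3(p+q)/2\rfloor$, so the upper bound follows from the main theorem (where the diameter is expressed as these quantities minus the nonnegative $g_1, g_2$).

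For the matching lower bound, I would exhibit an extremal pair: let $\alpha$ have $A$ monochromatic in colour $1$ and $B$ monochromatic in colour $2$, and let $\beta$ have $A$ monochromatic in colour $2$ with $B$ coloured using $y:=\lfloor(p+q)/2\rfloor$ vertices of colour $1$ and $q-y$ vertices of colour $3$. The plan is then to show any recolouring sequence from $\alpha$ to $\beta$ has length at least $\min(p+q+y,\,2p+2q-y) = \lfloor 3(p+q)/2\rfloor$ by dichotomising on whether colour $3$ ever appears on the $A$-side. In the ``no $A$-$3$'' case, $A$ uses only $\{1,2\}$; so during any moment when $A$ simultaneously contains both $1$ and $2$ (which is forced during its transition from all-$1$ to all-$2$), disjointness of $S_A$ and $S_B$ forces $B$ to be entirely coloured $3$. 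The sequence thus splits into three forced phases summing to $q+p+y$ recolourings: first $B$ is recoloured from all-$2$ to all-$3$ (cost $q$), then $A$ transitions to all-$2$ (cost $p$), then $B$ rearranges to its target (cost $y$). In the ``$A$-$3$'' case, the key freezing observation is that whenever $A$ uses two colours $\{i,3\}$, disjointness together with $|S_A|+|S_B|\le 3$ forces $S_B$ to the remaining singleton, so $B$ cannot be recoloured; the only productive exit from such a cluster is to finish transitioning all remaining $A$-vertices to $3$. This commits the sequence to a cluster path costing $2p+2q-y$, since each of the $p$ vertices of $A$ undergoes two recolourings (once $1\to 3$ and once $3\to 2$) and $B$ incurs $q+(q-y)$ recolourings (first clearing to all-$1$, then rearranging to reach $\beta$).

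The main obstacle will be ruling out subtle mixed sequences in which some $A$-vertex visits colour $3$ only transiently while others attempt to move directly. The freezing observation handles this: any deviation either costs at least two extra recolourings (if the visiting $A$-vertex returns to $\{1,2\}$, undoing its progress) or commits to the full ``$A$-$3$'' path, so no recolouring sequence can beat $\min(p+q+y,\,2p+2q-y)$. The balancing choice $y=\lfloor(p+q)/2\rfloor$ then makes this minimum equal to $\lfloor 3(p+q)/2\rfloor$, which matches the upper bound and also forces $g_1=g_2=0$ when $k=3$.
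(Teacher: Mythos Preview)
Your upper bound is exactly the paper's: substitute $k=3$ and observe all three regime expressions collapse to $3(p+q)/2$, whence the integer diameter is at most $\lfloor 3(p+q)/2\rfloor$.

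Your lower bound takes a genuinely different route. The paper simply re-uses its general first-regime construction with $(a,b)=(1,k-1)=(1,2)$: there $\alpha$ colours $U$ entirely with $1$ and splits $V$ between $2,3$, while $\beta$ swaps the roles. The paper then observes that its ``erasing'' argument for $q\ge kp$ actually goes through for \emph{all} $p,q$ once $a=1$ (since colour $1$ cannot be the first colour erased), yielding $d(\alpha,\beta)\ge 2p+q+\lfloor(q-p)/2\rfloor=\lfloor 3(p+q)/2\rfloor$ directly. Your pair $(\alpha,\beta)$ is different and asymmetric (only $\beta$ uses three colours), and you argue by a bespoke case analysis on whether colour $3$ ever visits~$A$. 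The paper's route is shorter because it inherits a lemma already proved; yours is more self-contained but requires more work.

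That said, your Case~2 argument has a real gap. You assert that once some $A$-vertex takes colour~$3$, ``the sequence commits to a cluster path costing $2p+2q-y$'', with every $A$-vertex forced through $1\to 3\to 2$ and $B$ forced through all-$1$. But the sequence can undo: from $S_A=\{1,3\}$ it may return to $S_A=\{1\}$ and then proceed via the ``no $A$-$3$'' route. You acknowledge this costs ``two extra recolourings'', yet you then still need the resulting cost to be at least $\min(p+q+y,\,2p+2q-y)$, not at least $2p+2q-y$; and more intricate back-and-forth patterns (visit $s_2=(\{3\},\{2\})$, then retreat) are not covered by a single two-step penalty. A clean fix is to change your case split: instead of ``does $3$ ever appear on $A$'', split on \emph{the first moment colour $2$ is absent from $B$}. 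Just before that step some $b_j$ is recoloured from $2$ to $c\in\{1,3\}$ with $c\notin S_A$, and since $2\in S_B$ held until now we have $2\notin S_A$ throughout. If $c=3$ then $S_A=\{1\}$ and $B$ is all-$3$, whence $\ge q$ steps on $B$ so far, $\ge p$ on $A$ afterwards, and $\ge y$ more on $B$; if $c=1$ then $S_A=\{3\}$ (so $A$ is all-$3$, costing $\ge p$ already and $\ge p$ more to reach all-$2$) and $B$ is all-$1$ (costing $\ge q$ already and $\ge q-y$ more). This yields exactly your two bounds with no loose ends.
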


In~\cref{fig:graph_of_functions} below, we plot the 3 upper bounds in Theorem~\ref{thr:main_diamCk(Kpq)} as functions in $q$, for $k=4$ and $p$ fixed.  But where do these bounds come from?
A natural guess for colourings $\a$ and $\b$ that are far apart is to have $\a$ use $a$ of the colours, equally, on part $U$ and the remaining $k-a$ of the colours, equally, on part $V$; and $\b$ swaps the parts of all colours, relative to $\a$, again using the colours on each part as equally as possible.

Formalising this intuition does indeed give a pair $\a,\b$ that is far apart. But we still must specify $a$, the number of colours used by $\a$ on $U$. When $q$ is large relative to $p$, namely $q\ge kp$, the worst case (largest distance) arises with $a=1$; and when $q<kp$, the worst case arises with $a=\floorfrac{k}{2}$.
For the pair $\a,\b$ of colourings arising with $a=1$, the number of recolouring steps needed is shown in~\cref{fig:graph_of_functions} in blue (we call this function $\mathcal{B}$).
For the pair $\a,\b$ of colourings arising with $a=\floorfrac{k}2$, we have two candidate recolouring strategies, shown in~\cref{fig:graph_of_functions} in orange and green (functions $\mathcal{O}$ and $\mathcal{G}$). The first of these ($\mathcal{O}$) is optimal when $q\le\ceilfrac{k}2p$, and the second ($\mathcal{G}$) is optimal when $\ceilfrac{k}2p\le q\le kp$.
So our upper bound for the diameter is $\max\{\mathcal{B}, \min\{\mathcal{O},\mathcal{G}\} \}.$

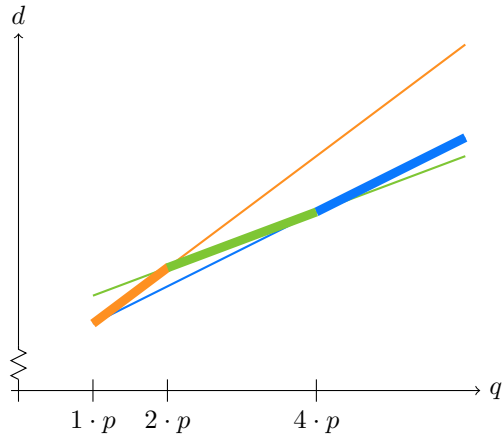
\begin{figure}[ht]
 \centering
\begin{tikzpicture}[scale=0.99,yscale=1.5]
 \draw[->] (-0.1, 0) -- (6.2, 0) node[right] {$q$};
 \draw[->] (0., 0.4) -- (0, 3.2) node[above] {$d$};
 \draw[line width=.15mm,decorate,decoration={zigzag,segment length=2mm,amplitude=1mm}] (0,0.1) -- (0,0.4);
 \draw (0,-0.1)--(0,0.1);w
 \draw[domain=1:6, thick, smooth, variable=\x, blue] plot ({\x}, {1/3*\x+1-1/3-2/5});
\draw[domain=1:6, thick, smooth, variable=\x, green] plot ({\x}, {1/4*\x+1-2/5});
 \draw[domain=4:6, line width=1.2mm, variable=\x, blue] plot ({\x}, {1/3*\x+1-1/3-2/5});
 \draw[domain=1:6, smooth, thick, variable=\x, red] plot ({\x}, {1/2*\x+1-1/2-2/5});
 \draw[domain=1:2.02, line width = 1.2mm, variable=\x, red] plot ({\x}, {1/2*\x+1-1/2-2/5});
\draw[domain=2:4, line width = 1.2mm, variable=\x, green] plot ({\x}, {1/4*\x+1-2/5});
 \foreach \y in {1,2,4}{
 \draw (\y,0.1)--(\y,-0.1) node[below]{$\y \cdot p$};
 }
\end{tikzpicture}
\captionsetup{width=.85\textwidth}
 \caption{The three upper bounds in Theorem~\ref{thr:main_diamCk(Kpq)} for $k=4$ and $p \le q \le 6p$. The bold line segments indicate the value of $\diam\C_k(K_{p,q})$, up to the error terms $g_1(k,p,q)$ and $g_2(k,p,q)$.}
 \label{fig:graph_of_functions}
\end{figure}

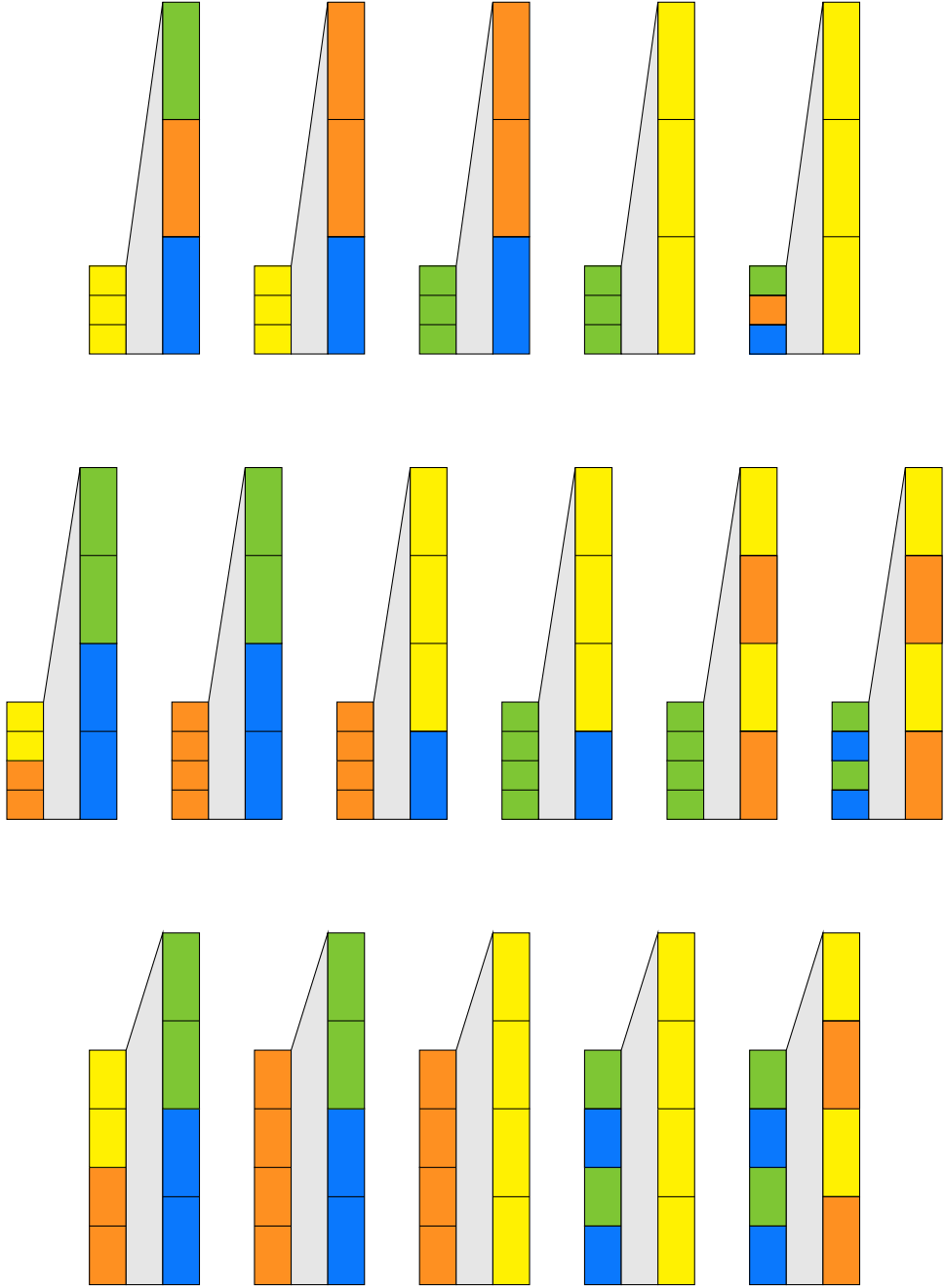
\begin{figure}[!ht]
\def\myspace{2.25cm}
\centering
\begin{centering}
\begin{tikzpicture}[yscale=.8]
 \draw [fill=yellow] (0,0) rectangle (0.5,1.5);
 \draw [fill=black!10!white] (0.5,0)--(1,0)--(1,6)--(0.5,1.5) -- cycle;
 \draw [fill=blue] (1,0) rectangle (1.5,2);
 \draw [fill=red] (1,4) rectangle (1.5,2);
 \draw [fill=green] (1,4) rectangle (1.5,6);

\draw (0,0.5)--(0.5,0.5);
\draw (0,1)--(0.5,1);
\draw (1,2)--(1.5,2);
\draw (1,4)--(1.5,4);
 
\begin{scope}[xshift=\myspace]
 \draw [fill=yellow] (0,0) rectangle (0.5,1.5);
 \draw [fill=black!10!white] (0.5,0)--(1,0)--(1,6)--(0.5,1.5) -- cycle;
 \draw [fill=blue] (1,0) rectangle (1.5,2);
 \draw [fill=red] (1,6) rectangle (1.5,2);
 \draw (0,0.5)--(0.5,0.5);
\draw (0,1)--(0.5,1);
\draw (1,2)--(1.5,2);
\draw (1,4)--(1.5,4);
\end{scope}
\begin{scope}[xshift=2*\myspace]
 \draw [fill=green] (0,0) rectangle (0.5,1.5);

 \draw [fill=blue] (1,0) rectangle (1.5,2);
 \draw [fill=red] (1,6) rectangle (1.5,2);
 \draw [fill=black!10!white] (0.5,0)--(1,0)--(1,6)--(0.5,1.5) -- cycle;
 \draw (0,0.5)--(0.5,0.5);
\draw (0,1)--(0.5,1);
\draw (1,2)--(1.5,2);
\draw (1,4)--(1.5,4);
\end{scope}
\begin{scope}[xshift=3*\myspace]
 \draw [fill=green] (0,0) rectangle (0.5,1.5);

 \draw [fill=yellow] (1,0) rectangle (1.5,6);

 \draw [fill=black!10!white] (0.5,0)--(1,0)--(1,6)--(0.5,1.5) -- cycle;
 \draw (0,0.5)--(0.5,0.5);
\draw (0,1)--(0.5,1);
\draw (1,2)--(1.5,2);
\draw (1,4)--(1.5,4);
\end{scope}
\begin{scope}[xshift=4*\myspace]
 \draw [fill=yellow] (1,0) rectangle (1.5,6);

 \draw [fill=blue] (0,0) rectangle (.5,.5);
 \draw [fill=red] (0,0.5) rectangle (.5,1);
 \draw [fill=green] (0,1) rectangle (.5,1.5);

 \draw [fill=black!10!white] (0.5,0)--(1,0)--(1,6)--(0.5,1.5) -- cycle;
 \draw (0,0.5)--(0.5,0.5);
\draw (0,1)--(0.5,1);
\draw (1,2)--(1.5,2);
\draw (1,4)--(1.5,4);
\end{scope}
\end{tikzpicture}
\end{centering}

\vspace{1.5 cm}

\begin{centering}
\begin{tikzpicture}[yscale=.8]
 \draw [fill=red] (0,0) rectangle (0.5,1);
 \draw [fill=yellow] (0,2) rectangle (0.5,1);
 \draw [fill=black!10!white] (0.5,0)--(1,0)--(1,6)--(0.5,2) -- cycle;
 \draw [fill=blue] (1,0) rectangle (1.5,3);
 \draw [fill=green] (1,3) rectangle (1.5,6);
\draw (0,0.5)--(0.5,0.5);
\draw (0,1.5)--(0.5,1.5);
\draw (1,1.5)--(1.5,1.5);
\draw (1,4.5)--(1.5,4.5);

\begin{scope}[xshift=\myspace]
 \draw [fill=red] (0,0) rectangle (0.5,1);
 \draw [fill=red] (0,2) rectangle (0.5,1);
 \draw [fill=black!10!white] (0.5,0)--(1,0)--(1,6)--(0.5,2) -- cycle;
 \draw [fill=blue] (1,0) rectangle (1.5,3);
 \draw [fill=green] (1,3) rectangle (1.5,6);
 \draw (0,0.5)--(0.5,0.5);
\draw (0,1.5)--(0.5,1.5);
\draw (1,1.5)--(1.5,1.5);
\draw (1,4.5)--(1.5,4.5);
\end{scope}
\begin{scope}[xshift=2*\myspace]
 \draw [fill=red] (0,0) rectangle (0.5,1);
 \draw [fill=red] (0,2) rectangle (0.5,1);
 \draw [fill=black!10!white] (0.5,0)--(1,0)--(1,6)--(0.5,2) -- cycle;
 \draw [fill=blue] (1,0) rectangle (1.5,1.5);
 \draw [fill=yellow] (1,1.5) rectangle (1.5,6);
 \draw (0,0.5)--(0.5,0.5);
\draw (0,1.5)--(0.5,1.5);
\draw (1,1.5)--(1.5,1.5);
\draw (1,4.5)--(1.5,4.5);
\draw (1,3)--(1.5,3);
\end{scope}
\begin{scope}[xshift=3*\myspace]
 \draw [fill=green] (0,0) rectangle (0.5,1);
 \draw [fill=green] (0,2) rectangle (0.5,1);
 \draw [fill=black!10!white] (0.5,0)--(1,0)--(1,6)--(0.5,2) -- cycle;
 \draw [fill=blue] (1,0) rectangle (1.5,1.5);
 \draw [fill=yellow] (1,1.5) rectangle (1.5,6);
 \draw (0,0.5)--(0.5,0.5);
\draw (0,1.5)--(0.5,1.5);
\draw (1,1.5)--(1.5,1.5);
\draw (1,3)--(1.5,3);
\draw (1,4.5)--(1.5,4.5);
\end{scope}
\begin{scope}[xshift=4*\myspace]
 \draw [fill=green] (0,0) rectangle (0.5,1);
 \draw [fill=green] (0,2) rectangle (0.5,1);
 \draw [fill=black!10!white] (0.5,0)--(1,0)--(1,6)--(0.5,2) -- cycle;
 \draw [fill=yellow] (1,1.5) rectangle (1.5,6);
 \draw [fill=red] (1,0) rectangle (1.5,1.5);
 \draw [fill=red] (1,3) rectangle (1.5,4.5);
 \draw (0,0.5)--(0.5,0.5);
\draw (0,1.5)--(0.5,1.5);
\draw (1,1.5)--(1.5,1.5);
\draw (1,4.5)--(1.5,4.5);
\end{scope}
\begin{scope}[xshift=5*\myspace]
 \draw [fill=green] (0,0) rectangle (0.5,1);
 \draw [fill=green] (0,2) rectangle (0.5,1);
 \draw [fill=blue] (0,0) rectangle (0.5,0.5);
 \draw [fill=blue] (0,1) rectangle (0.5,1.5);
 \draw [fill=black!10!white] (0.5,0)--(1,0)--(1,6)--(0.5,2) -- cycle;
 \draw [fill=yellow] (1,1.5) rectangle (1.5,6);
 \draw [fill=red] (1,0) rectangle (1.5,1.5);
 \draw [fill=red] (1,3) rectangle (1.5,4.5);
 \draw (0,0.5)--(0.5,0.5);
\draw (0,1.5)--(0.5,1.5);
\draw (1,1.5)--(1.5,1.5);
\draw (1,4.5)--(1.5,4.5);
\end{scope}
\end{tikzpicture}
\end{centering}

\vspace{1.5 cm}

\begin{centering}
\begin{tikzpicture}[yscale=.8]
 \draw [fill=red] (0,0) rectangle (0.5,2);
 \draw [fill=yellow] (0,4) rectangle (0.5,2);
 \draw [fill=black!10!white] (0.5,0)--(1,0)--(1,6)--(0.5,4) -- cycle;
 \draw [fill=blue] (1,0) rectangle (1.5,3);
 \draw [fill=green] (1,3) rectangle (1.5,6);
 \draw (0,3)--(0.5,3);
\draw (0,1)--(0.5,1);
\draw (1,1.5)--(1.5,1.5);
\draw (1,4.5)--(1.5,4.5);
\begin{scope}[xshift=\myspace]
 \draw [fill=red] (0,0) rectangle (0.5,2);
 \draw [fill=red] (0,4) rectangle (0.5,2);
 \draw [fill=black!10!white] (0.5,0)--(1,0)--(1,6)--(0.5,4) -- cycle;
 \draw [fill=blue] (1,0) rectangle (1.5,3);
 \draw [fill=green] (1,3) rectangle (1.5,6);
 \draw (0,3)--(0.5,3);
\draw (0,1)--(0.5,1);
\draw (1,1.5)--(1.5,1.5);
\draw (1,4.5)--(1.5,4.5);
\end{scope}
\begin{scope}[xshift=2*\myspace]
 \draw [fill=red] (0,0) rectangle (0.5,2);
 \draw [fill=red] (0,4) rectangle (0.5,2);
 \draw [fill=black!10!white] (0.5,0)--(1,0)--(1,6)--(0.5,4) -- cycle;
 \draw [fill=yellow] (1,0) rectangle (1.5,3);
 \draw [fill=yellow] (1,3) rectangle (1.5,6);
 \draw (0,3)--(0.5,3);
\draw (0,1)--(0.5,1);
\draw (1,1.5)--(1.5,1.5);
\draw (1,4.5)--(1.5,4.5);
\end{scope}

\begin{scope}[xshift=3*\myspace]
 \draw [fill=green] (0,1) rectangle (0.5,4);
 \draw [fill=blue] (0,3) rectangle (0.5,2);
 \draw [fill=blue] (0,1) rectangle (0.5,0);
 \draw [fill=black!10!white] (0.5,0)--(1,0)--(1,6)--(0.5,4) -- cycle;
 \draw [fill=yellow] (1,0) rectangle (1.5,3);
 \draw [fill=yellow] (1,3) rectangle (1.5,6);
 \draw (0,3)--(0.5,3);
\draw (0,1)--(0.5,1);
\draw (1,1.5)--(1.5,1.5);
\draw (1,4.5)--(1.5,4.5);
\end{scope}

\begin{scope}[xshift=4*\myspace]
 \draw [fill=green] (0,1) rectangle (0.5,4);
 \draw [fill=blue] (0,3) rectangle (0.5,2);
 \draw [fill=blue] (0,1) rectangle (0.5,0);
 \draw [fill=black!10!white] (0.5,0)--(1,0)--(1,6)--(0.5,4) -- cycle;
 \draw [fill=yellow] (1,1.5) rectangle (1.5,6);
 \draw [fill=red] (1,0) rectangle (1.5,1.5);
 \draw [fill=red] (1,3) rectangle (1.5,4.5);
 \draw (0,3)--(0.5,3);
\draw (0,1)--(0.5,1);
\draw (1,1.5)--(1.5,1.5);
\draw (1,4.5)--(1.5,4.5);
\end{scope}
\end{tikzpicture}
\end{centering}
 \captionsetup{width=.8675\textwidth}
 \caption{Examples, for $k=4$, of recolouring sequences between extremal colourings of $K_{p,q}$ in the 3 regimes.
 Each coloured rectangle shows the proportion of vertices coloured in a particular colour.
 Each grey area indicates the edges of $K_{p,q}$. Top: An optimal recolouring sequence from $\a$ to $\b$ when $a=1$. The number of steps needed is shown in \cref{fig:graph_of_functions} as $\mathcal{B}$. Middle: One candidate for an optimal recolouring sequence from $\a$ to $\b$ when $a=\floorfrac{k}{2}$. The number of steps needed is shown in \cref{fig:graph_of_functions} as $\mathcal{G}$. Bottom: The other candidate for an optimal recolouring sequence from $\a$ to $\b$ when $a=\floorfrac{k}{2}$. The number of steps needed is shown in \cref{fig:graph_of_functions} as $\mathcal{O}$.}
 \label{fig:SwitchColours}
\end{figure}
To prove the upper bounds in Theorem~\ref{thr:main_diamCk(Kpq)}, we consider a few candidate recolouring sequences (constructed inductively). We use an averaging argument to show, for each pair of colourings $\a,\b$, not just those defined above, that one of these candidates recolours $\a$ to $\b$ in a number of steps at most the bound in Theorem~\ref{thr:main_diamCk(Kpq)}; examples of these candidate recolouring sequences, when $k=4$, (applied on a pair of extremal colourings $\a, \b$) are shown in \cref{fig:SwitchColours}.

To prove the lower bound in Theorem~\ref{thr:main_diamCk(Kpq)}, we show further that indeed no recolouring sequence can do better.
We remark briefly about the error terms $g_1(k,p,q)$ and $g_2(k,p,q)$. When $p$ and $q$ are both multiples of $\floorfrac{k}2\ceilfrac{k}2=\floorfrac{k^2}4$, these error terms vanish. In the general case, they arise because we cannot quite use all colours ``equally'' on a part when the number of colours does not divide the order of that part. (We suspect that $g_1(k,p,q)=O(k)$, but our efforts to prove this encountered various technical complications.)
Moreover, the error term $g_2(k,p,q)$ becomes $1$ or $0$ when $q$ is sufficiently small compared to $k$. Similarly, as proved in~\cite[Thm.~20]{CCC24}) the diameter of $\C_k(K_{p,q})$ equals $2p+q$ whenever $p \leq q \leq k-2$.

As Theorem~\ref{thr:main_diamCk(Kpq)} illustrates, determining the diameter of $\C_k(G)$ for graphs $G$ in even a simple class like complete bipartite graphs is already quite involved. This example hints at the complexity of the problem in general. 
In this light, proving that $\diam C_k(G)=n(G)+\mu(G)$ for all $k\ge \Delta(G)+2$, and all graphs $G$, (see \cite[Conj.~1]{CCC24}) would be very interesting. When we restrict to regular graphs, this problem becomes Regular Cereceda's Conjecture~\cite[Conj.~3]{CCC24}.
In some sense (expressed in the maximum degree), once $k$ is sufficiently large to guarantee connectedness of $\C_k(G)$, the diameter of $\C_k(G)$ would be known as well. For smaller $k,$ one can expect many different regimes, and it seems impossible to state a precise answer in general.

\section{Optimal Renaming Lemma}
\label{opt-sec}

Bonamy and Bousquet~\cite[Lemma~5]{BB18} proved a variant of the Optimal Renaming Lemma 
below; for clarity, we call their version the Rough Renaming Lemma.  It has been reused in several papers as a subroutine. 
In a typical application, we are given two colourings $\alpha$ and $\beta$ that can be efficiently recoloured to some carefully chosen colourings $\alpha', \beta'$ that induce the same partition into colour classes. The Rough Renaming Lemma is then used as a `finishing blow' to recolour $\alpha'$ to $\beta'$, thus establishing a short recolouring sequence between $\alpha$ and $\beta$.
But the upper bound in the Rough Renaming Lemma, of $2n$, is suboptimal. 
In fact, this recolouring can be done with only $\lfloor 3n/2\rfloor$ steps, which is sharp. 
So reusing the Rough Renaming Lemma is wasteful, especially in papers that prove a linear bound on the 
diameter of the reconfiguration graph. 
In a sense, the ideas needed to prove the Optimal Renaming Lemma were already present in the argument of Bonamy and Bousquet, 
but authors of subsequent papers seem to be unaware. By making this improvement explicit, we indirectly strengthen the upper bounds in 
the papers that have used this Rough Renaming Lemma.

\begin{ORL}
Let $G$ be a graph on $n$ vertices, fix $k\geq\chi(G)$, and let $\alpha$ and $\beta$ be two proper $k$-colourings of $G$ that induce the same partition of vertices into colour classes $C_1,\ldots, C_k$. If $\ell\geq k+1$, then $\alpha$ can be recoloured to $\beta$ in $\mathcal{C}_{\ell}(G)$ in at most $\lfloor 3n/2 \rfloor$ steps, with each vertex recoloured at most $2$ times.
\end{ORL}

\begin{proof}
We define an auxiliary loopless digraph $D$ with vertex set $V(D):= \{C_1,\ldots, C_k \}$, and with $(C_i,C_j)$ a directed edge of $D$ if and only if $\alpha(C_i) = \beta(C_j)$ and $i \neq j$. 
By construction, the vertices of $D$ have pairwise distinct colours, so each vertex has 
in-degree and out-degree both at most $1$. So the vertices of $D$ can be partitioned into directed paths and directed cycles. Now we sequentially recolour the vertices of each directed path and cycle.
For a directed path $C_{i_1},\ldots,C_{i_t}$ (possibly with $t=1$), we simply recolour $C_{i_1}$ to $\beta(C_{i_1})$, then recolour $C_{i_2}$ to $\beta(C_{i_2})$, etcetera, finally recolouring $C_{i_t}$ to $\beta(C_{i_t})$. This takes $ \sum_{j=1}^{t} |C_{i_j}|$ recolouring steps.
Note that every recolouring step induces an updated directed graph $D$. But this is of no consequence for us since every colour class $C_i$ that is currently coloured $\beta(C_i)$ has in-degree and out-degree in $D$ both $0$.

For a directed cycle $C_{i_1},\ldots, C_{i_t}$ with $t\geq 2$, we assume without loss of generality that $C_{i_1}$ has minimum size among $C_{i_1},\ldots,C_{i_t}$. We first recolour $C_{i_1}$ to some free colour in $[\ell] \setminus \gamma(V(G))$, where $\gamma$ denotes the current colouring. Note that this cannot create a new arc incident to the current directed cycle.
Next, we proceed along the directed cycle, starting by recolouring $C_{i_2}$ to $\beta(C_{i_2})$ and ending by recolouring $C_{i_t}$ to $\beta(C_{i_t})$. To finish, we recolour $C_{i_1}$ again, this time to $\beta(C_{i_1})$. In total, the number of recolouring steps we use is $|C_{i_1}| + \sum_{j=1}^{t} |C_{i_j}|$. This is at most $\lfloor 3/2 \cdot \sum_{j=1}^{t} |C_{i_j}| \rfloor$ because $t\geq 2$ and we chose $C_{i_1}$ to be of minimum size. 
In this way we can recolour every directed path and cycle, one after the other. 
So the total number of recolouring steps we use is at most $\lfloor 3/2 \cdot \sum_{i=1}^{k} |C_{i}| \rfloor = \lfloor 3n/2 \rfloor$. Moreover, every vertex is recoloured only once or, if it belonged to the smallest colour class of its directed cycle, twice.
\end{proof}

\begin{cor}
The upper bound in~\cite[Thm.~$6$]{BCM24}, for $(P_3+P_1)$-free graphs, improves from $6n$ to $\lfloor(4+1/2)n\rfloor$; and the upper bound in~\cite[Thm.~$7$]{BCM24}, for $(2K_2,C_4)$-free graphs, improves from $4n$ to $\lfloor 7n/2 \rfloor$.
\end{cor}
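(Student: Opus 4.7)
The plan is to inspect the proofs of Theorems~6 and~7 of~\cite{BCM24}, locate each invocation of the Rough Renaming Lemma, and substitute the Optimal Renaming Lemma in its place, then track the resulting improvement in the final bound. Since the Rough Renaming Lemma contributes a term of $2n$ to a total upper bound whereas the Optimal Renaming Lemma contributes only $\lfloor 3n/2 \rfloor$, each such replacement saves (nearly) $n/2$ steps. Matching the claimed improvements, from $6n$ to $\lfloor (4+1/2)n \rfloor$ and from $4n$ to $\lfloor 7n/2 \rfloor$, corresponds to $3$ and $1$ replacements respectively.

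First I would check that at each place where the Rough Renaming Lemma is invoked in~\cite{BCM24}, the hypotheses of the Optimal Renaming Lemma are in fact satisfied. Both lemmas take as input two colourings $\alpha'$, $\beta'$ that induce the same partition of $V(G)$ into colour classes. The Optimal Renaming Lemma additionally requires one free colour, i.e. that the ambient palette $\ell$ exceeds the number $k$ of parts in the common partition by at least $1$; since the applications in~\cite{BCM24} use an ambient palette of size at least $\chi(G)+O(1)$, this extra colour is automatically available. This step is mostly bookkeeping, as the Rough Renaming Lemma is invoked in~\cite{BCM24} as a clean subroutine and its inputs are already of the required form.

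Next, I would carry out the quantitative substitution. In the proof of~\cite[Thm.~6]{BCM24} for $(P_3+P_1)$-free graphs, three distinct invocations of the Rough Renaming Lemma each contribute $2n$ to the total of $6n$; replacing each such contribution by $\lfloor 3n/2 \rfloor$ and consolidating floors produces the new bound $\lfloor 9n/2\rfloor = \lfloor (4+1/2)n \rfloor$. For~\cite[Thm.~7]{BCM24} on $(2K_2,C_4)$-free graphs, there is a single invocation contributing $2n$ to the total $4n$, and replacing it gives $4n - 2n + \lfloor 3n/2 \rfloor = \lfloor 7n/2 \rfloor$.

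I expect the only mild obstacle to be the careful reading of~\cite{BCM24}: identifying every invocation of the Rough Renaming Lemma, checking that the other contributions to the total bound are unaffected by the substitution (so that no saving is double-counted), and confirming that flooring occurs only once, at the end, so that the final bound tightens precisely to the stated values. No new mathematics beyond the Optimal Renaming Lemma is needed.
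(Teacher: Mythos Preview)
Your proposal is correct and follows essentially the same approach as the paper: count the invocations of the Rough Renaming Lemma in the proofs of~\cite[Thms.~6,~7]{BCM24} (three and one, respectively), replace each by the Optimal Renaming Lemma, and record the saving of $\lceil n/2\rceil$ per invocation. The paper's proof is terser but identical in substance; your extra remarks about verifying hypotheses and handling floors are reasonable due diligence but not needed for the argument.
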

\begin{proof}
In these proofs, the authors use the Rough Renaming Lemma,
which promises a recolouring sequence of length at most $2n$, rather than of length at most $\lfloor 3n/2 \rfloor$. So we gain $\lceil n/2\rceil$ steps each time the lemma is applied. To prove Theorem $6$ it is applied 3 times, while to prove Theorem $7$ it is applied 1 time.
\end{proof}

\begin{cor}
In~\cite{BCH24} 
its authors prove that for every $2K_2$-free bipartite $n$-vertex graph $G$, and for every $\ell \geq \chi(G)+1$, we have $\diam \C_{\ell}(G)\le 4n$. That proof uses the Rough Renaming Lemma once. 
By instead using The Optimial Renaming Lemma, we improve the bound to $\lfloor 7n /2\rfloor$.
In the same paper, the authors also prove that for every $2K_2$-free $n$-vertex graph $G$, and for every $\ell \geq \chi(G)+1$, 
we have $\diam\C_{\ell}(G)\le 14n$. Again, they use the Rough Renaming Lemma once, so the bound now improves 
to $\lfloor(13+ 1/2)n \rfloor$.
\end{cor}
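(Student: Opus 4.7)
The plan is to audit the proofs of the two upper bounds in~\cite{BCH24} and substitute the Optimal Renaming Lemma wherever the Rough Renaming Lemma is invoked, tracking the arithmetic change in the resulting total. Since the corollary asserts only that two existing length bounds can be sharpened by a single substitution, nothing new has to be built: the work is a careful audit plus an arithmetic check.

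First I would locate the unique application of the Rough Renaming Lemma in the proof of $\diam\C_{\ell}(G)\le 4n$ for $2K_2$-free bipartite graphs and verify that at the point of invocation the two colourings being compared (i) induce the same partition of $V(G)$ into colour classes and (ii) use at most $\chi(G)\le \ell-1$ colours. These are the hypotheses under which~\cite{BCH24} invokes the Rough Renaming Lemma in the first place, so they are automatically satisfied; hence the Optimal Renaming Lemma applies verbatim, with no modification to the surrounding argument.

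With the substitution, one contribution of $2n$ to the length bound is replaced by a contribution of $\lfloor 3n/2\rfloor$, a saving of exactly $\lceil n/2\rceil$ steps. Thus the bound drops from $4n$ to $4n-\lceil n/2\rceil = \lfloor 7n/2\rfloor$. Applying the same mechanism to the proof of $\diam\C_{\ell}(G)\le 14n$ for general $2K_2$-free graphs, where the Rough Renaming Lemma is again invoked exactly once under the same two hypotheses, yields $14n-\lceil n/2\rceil = \lfloor 27n/2\rfloor = \lfloor(13+1/2)n\rfloor$.

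The only real obstacle is bookkeeping: one must confirm that in each of the two proofs the renaming lemma is applied to a pair of colourings of the \emph{full} $n$-vertex graph $G$, not to a colouring of a smaller induced subgraph on $n'<n$ vertices, since otherwise the saving would shrink to $\lceil n'/2\rceil$. Inspection of the two relevant proofs in~\cite{BCH24} confirms the former in each case, so the stated improved bounds follow immediately.
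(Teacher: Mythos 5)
Your proposal is correct and matches the paper's approach, which is exactly the same substitution-and-bookkeeping argument sketched in the proof of the preceding corollary: replace the one invocation of the Rough Renaming Lemma by the Optimal Renaming Lemma, saving $\lceil n/2\rceil$ steps, so $4n$ drops to $\lfloor 7n/2\rfloor$ and $14n$ drops to $\lfloor 27n/2\rfloor=\lfloor(13+1/2)n\rfloor$. The only added content in your write-up is the (correct and worthwhile) caveat that the lemma must be applied to colourings of all $n$ vertices, which the paper leaves implicit.
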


\begin{cor}
In~\cite{BC25}, 
the Rough Renaming Lemma is used once to show that for every \{triangle, co-diamond\}-free graph $G$ and for every $\ell \geq \chi(G)+1$, we have $\diam \C_{\ell}(G)\le 6n$, unless $G$ is isomorphic to $K_{\ell,\ell}$ minus a perfect matching. 
The Optimal Renaming Lemma improves this bound to $\lfloor(5+1/2)n\rfloor$.
\end{cor}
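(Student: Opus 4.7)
The plan is to open up the proof of the $6n$ upper bound in~\cite{BC25} and identify the single invocation of the Rough Renaming Lemma, which is used there as a black box to finish the recolouring between a pair $\alpha',\beta'$ of $\ell$-colourings that induce the same partition into colour classes. Everything else in~\cite{BC25} is agnostic to the identity of the renaming lemma used, so I would simply substitute the Optimal Renaming Lemma into that single step.

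That substitution replaces a bound of $2n$ by a bound of $\lfloor 3n/2\rfloor$, and therefore saves
\[
2n - \lfloor 3n/2\rfloor \;=\; \lceil n/2\rceil
\]
recolouring steps in total. The new upper bound is thus $6n - \lceil n/2\rceil$, and a brief case check by parity of $n$ yields $6n - \lceil n/2\rceil = \lfloor 11n/2\rfloor = \lfloor (5+1/2)n\rfloor$, as claimed. The exceptional case $G\cong K_{\ell,\ell}$ minus a perfect matching is inherited verbatim from~\cite{BC25}, since the corollary explicitly excludes it.

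I anticipate no serious obstacle. The only two points to verify are: (i) that the hypothesis $\ell \ge k+1$ of the Optimal Renaming Lemma (where $k$ is the number of colour classes of the common partition, hence $k\le \chi(G)$) is implied by the standing assumption $\ell\ge \chi(G)+1$ of the corollary; and (ii) that nothing downstream in~\cite{BC25}'s argument relies on any property of the Rough Renaming Lemma beyond its length bound. Both are routine to confirm by inspection of~\cite{BC25}; this follows essentially the same template already used in the two preceding corollaries of this section, where the gain of $\lceil n/2\rceil$ per application of the renaming lemma is simply multiplied by the number of applications.
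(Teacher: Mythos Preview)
Your proposal is correct and follows exactly the template the paper uses for this and the neighbouring corollaries: substitute the Optimal Renaming Lemma for the single use of the Rough Renaming Lemma in~\cite{BC25}, gaining $\lceil n/2\rceil$ steps and reducing $6n$ to $\lfloor 11n/2\rfloor$. The paper does not even spell out a separate proof for this corollary, so your write-up is if anything more detailed than the original.
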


\begin{cor}
In~\cite{merkel} 
it is shown that for every $3K_1$-free graph $G$, we have $\diam\C_{\chi(G)+1}(G)\le 4n$. The Rough Renaming Lemma is used once, so the Optimal Renaming Lemma improves this bound to $\lfloor(3+1/2)n\rfloor$.
\end{cor}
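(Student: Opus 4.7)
The plan is to follow the template already set by the three preceding corollaries. I would open the reference~\cite{merkel}, locate the single invocation of the Rough Renaming Lemma inside its proof of $\diam\C_{\chi(G)+1}(G)\le 4n$ for $3K_1$-free $G$, and replace that invocation with our Optimal Renaming Lemma, leaving all other parts of the argument in~\cite{merkel} untouched. Since the Optimal Renaming Lemma returns a recolouring sequence of length at most $\lfloor 3n/2\rfloor$ in place of the Rough Renaming Lemma's bound of $2n$, the single substitution saves $\lceil n/2\rceil$ steps. The improved overall bound is therefore $4n - \lceil n/2\rceil = \lfloor 7n/2\rfloor = \lfloor(3+1/2)n\rfloor$, as required.

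Before performing the substitution I would verify two compatibility points. First, the hypotheses of the Optimal Renaming Lemma match those of the Rough Renaming Lemma exactly (two proper colourings that induce the same partition into colour classes, and $\ell\ge \chi(G)+1$), so no adjustment to the surrounding argument of~\cite{merkel} is needed. Second, and more importantly for the numerics, the Rough Renaming Lemma must be applied in~\cite{merkel} to colourings on the full vertex set of $G$, so that the full $\lceil n/2\rceil$ saving is realised. If the lemma were applied only to some subgraph on $n' < n$ vertices, then the saving would drop to $\lceil n'/2\rceil$, and the resulting bound would be $4n-\lceil n'/2\rceil$ rather than $\lfloor 7n/2\rfloor$.

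The main (and really only) obstacle I anticipate is this bookkeeping verification by inspection of~\cite{merkel}. This is typical of the ``finishing blow'' usage described in Section~\ref{opt-sec}: once a carefully chosen pair $\alpha',\beta'$ inducing a common partition has been reached on all of $V(G)$, the renaming lemma is called on all of $V(G)$. Assuming this is indeed the case in~\cite{merkel}, nothing else is required, and the corollary follows from the identity $4n-\lceil n/2\rceil=\lfloor 7n/2\rfloor$.
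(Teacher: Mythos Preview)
Your proposal is correct and mirrors exactly the paper's approach: the corollary is stated without a separate proof, relying on the template established in the proof of the first corollary, namely substituting the Optimal Renaming Lemma for the single use of the Rough Renaming Lemma in~\cite{merkel} to save $\lceil n/2\rceil$ steps. Your additional remarks about verifying that the renaming lemma is applied to all of $V(G)$ are sensible due diligence but not spelled out in the paper.
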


\section{Distances in reconfiguration graphs of complete bipartite graphs}\label{sec:diam_Ck(Kpq)}
\label{Kpq-sec}
In this section, we consider $k$-colourings of $K_{p,q}$ and prove~\cref{thr:main_diamCk(Kpq)}.
To begin, we prove the lower bound. 
\subsection{Lower Bounds for \texorpdfstring{$K_{p,q}$}{K-p,q}
}
\label{lower:ssec}

It is trivial to get a lower bound of the order of the graph, $p+q$, by constructing two colourings that differ on each vertex.
To improve this bound, our basic idea is to construct two $k$-colourings that require the colours on many pairs of vertices (in opposite parts) to ``swap''; that is, the old colour of vertex 1 is the new colour of vertex 2 and vice versa. For every such pair, a recolouring step must be ``wasted'', since the first vertex in the pair to be recoloured cannot immediately receive its desired new colour.

\begin{proof}[Proof of the lower bound in~\cref{thr:main_diamCk(Kpq)}]
Fix integers $a, b, k$ with $1 \le a\le b$ and $k=a+b$. For convenience, we assume through most of the proof that $ab$ divides both $p$ and $q$; however, near the end we comment about how to remove this assumption.
In fact we will mainly care about the cases that $(a,b)\in\{(1,k-1),(\floorfrac{k}2,\ceilfrac{k}2)\}$, but for the time being we consider the more general case above.
We define the colourings $\a$ and $\b$ using $ab$ pairs of sets $U_{i,j}, V_{i,j}$ of sizes (respectively) $\frac{p}{ab}$ and $\frac{q}{ab}$, 
with $i \in [a]$ and $j \in [a+1,k]$,
where $U$ and $V$ are the unions of these sets. 
 The colourings $\a$ and $\b$ are defined by $\a(U_{i,j})=i=\b(V_{i,j})$ and $\b( U_{i,j})=j=\a(V_{i,j})$.
 For convenience, we also let $U_i := \cup_{j \in [a+1,k]} U_{i,j}$ and $V_j := \cup_{i \in [a]} V_{i,j}$.

Since the colours on $U$ and $V$ swap from $\a$ to $\b$, there is always a first moment that $U$ resp. $V$, and in particular $U_i$ 
(or $V_j$), is completely coloured with colours different from $i$ (resp. $j$). We call this the time when colour $i$ or colour $j$ is \emph{erased}. (We do not care about a colour disappearing and reappearing on a part, since we are only proving a lower bound.) 
We order the sets $U_i$ (renaming colours if needed) so that colour 1 is erased from $U_1$ before colour 2 is erased from $U_2$, which happens before colour 3 is erased from $U_3$, etc. Similarly, if $j<j'$, then we assume that colour $j$ is erased from $V_j$ before colour $j'$ is erased from $V_{j'}$.

When colour $i$ gets erased from $U_i$, let $f(i)$ denote the number of the $b$ colours on $V$ that are not yet erased. 
So there are at least $f(i)$ sets $U_{i,j}$ which have been recoloured entirely with colours different from their corresponding target
colour $j$. Analogously, we define $f(j)$, for each $j \in [a+1,k]$, as the number of colours that are not yet erased on $U$ 
when $j$ is erased from $V$ (for the first time).

For each pair $(i,j) \in [a] \times [a+1,k]$, one of the two colours $i$ and $j$ must be erased first, when the other is not yet erased.
Since we do not consider reoccurence of a colour after being erased,
this implies $\sum_{h=1}^{k} f(h)=ab$. More specifically, for all distinct $i\in [a]$ and $j\in [a+1,k]$, we define $X_{i<j}$ to be 1 if colour $i$ is erased from $U$ before colour $j$ is erased from $V$ and define it to be 0 otherwise; and we define $X_{j<i}$ as $1-X_{i<j}$. 
Now we get
$\sum_{h=1}^kf(h)
=\sum_{h=1}^af(h)+\sum_{h=a+1}^bf(h) 
= \sum_{h=1}^a\sum_{\ell=a+1}^kX_{h<\ell}+\sum_{\ell=a+1}^k\sum_{h=1}^aX_{\ell<h} 
= \sum_{h=1}^a\sum_{\ell=a+1}^k(X_{h<\ell}+X_{\ell<h}) 
= \sum_{h=1}^a\sum_{\ell=a+1}^k1 = a(k-(a+1)+1) = ab$.

When colour $i$ is erased from $U$, for each of the $f(i)$ colours $j$ that are not yet erased from $V$, the vertex subset $U_{i,j}$
has been completely recoloured with colours other than $j$, so all its vertices will need to be recoloured again to reach $\b$.
An analogous argument works when each colour $j$ is erased from $V$.
 Thus, a lower bound for the number of recolourings needed to transform $\a$ into $\b$ is 
 \begin{equation}\label{eq:LB_recol}
 p+q + \sum_{i=1}^a f(i) \frac{p}{ab} + \sum_{j=a+1}^k f(j) \frac{q}{ab} = 2p+q + \sum_{j=a+1}^k f(j) \frac{q-p}{ab}.
 \end{equation}

Before continuing, we mention that if $q\ge kp$, then we let $(a,b):=(1,k-1)$; and if $q < kp$, then we let $(a,b):=(\lfloor k/2\rfloor, \lceil k/2\rceil)$.
Depending on the order in which the colours were erased, we get different lower bounds.
Recall from above that the first colour erased is either $1$ or $a+1$, and the last colour erased is either $a$ or $k$.
Thus, we are in one of the following two cases.

\textbf{Case 1: $\bm{a+1}$ is the first colour erased or $\bm{a}$ is the last colour erased.}
 In the first case $f(a+1)=a$, and in the second case $f(j)\ge 1$ for every $a+1 \le j \le k$.
 In both cases $\sum_{j=a+1}^k f(j) \ge a$.
 Hence the number of recolourings needed is at least $2p+q+ a \frac{q-p}{ab}= 2p+q + \frac{q-p}{b}$.
If $q\ge kp$, then $b=k-1$, so we have the desired bound. And if $q\le \ceilfrac{k}{2}p$, then $b=\ceilfrac{k}2$ so we again have the desired bound. Finally, suppose that $\ceilfrac{k}2p<q<kp$. In this case, substituting $q > \ceilfrac{k}2p$ shows that the above bound $2p+q+\frac{q-p}{\ceilfrac{k}2}$ exceeds the desired bound: 
$\frac{q-p}{\ceilfrac{k}2}- \left(p+\frac{q-kp}{\floorfrac{k^2}4}\right) \ge \frac{\ceilfrac{k}2p-p}{\ceilfrac{k}2} - \left(p+\frac{\ceilfrac{k}2p-kp}{\floorfrac{k^2}4}\right) = \left(p-\frac{p}{\ceilfrac{k}2}\right)-\left(p-\frac{p\floorfrac{k}2}{\ceilfrac{k}2\floorfrac{k}2}\right)=0$.
In this case, we are done. So we now assume that we are in Case 2, below.
 
\textbf{Case 2: $\bm{1}$ is the first colour erased and $\bm{k}$ is the last one.}
 We define $x$ and $y$ as follows. Let $a+1$ be the $(x+1)^{th}$ colour to be erased;
 that is, we first erase $x$ colours on $U$.
 Let $a$ be the $(k-y)^{th}$ colour to be erased; that is, 
 $y$ is the number of colours on $V$ that we erase after erasing colour $a$ from $U$.

 Note that when we erase from $U$ colours $1$ up to $x$, we must recolour all vertices that used these colours with colours 
 from $[a]$.
 All of these vertices must be recoloured again, before the colours $k-y+1$ up to $k$ are erased from $V$ (since all colours in $[a]$ are erased before any colours in $[k-y+1,k]$). 
 This implies that in addition to~\eqref{eq:LB_recol}, we have at least $xy \frac{p}{ab}$ more recolourings.
 Also, $f(a+1)=a-x$ and $f(j)\ge 1$ for all $j\in[a+2,k-y]$. Thus $\sum_{j=a+1}^k f(j) \ge (a-x)+(k-y-(a+1))=k-x-y-1$.
 As a lower bound on the number of recolourings, we thus get 
\begin{align}
2p+q+ (k-x-y-1) \frac{q-p}{ab}+xy \frac{p}{ab}.\label{gen-lb}
\end{align}

Because we are in Case 2, we know that $a\ge 2$, since we first erase colour 1, and all vertices that were coloured 1 
must be recoloured with other colours in $[a]$. This implies that $q < kp$ and thus that 
$a= \floorfrac k2$ and $b=\ceilfrac k2$. By definition, $x\in[a]$ and $y\in[b]$. But in fact we may assume $x\in[a-1]$.
(If instead we have $x=a$, then erasing colour $a$ from $U$ will require reintroducing to $U$ some colour in $[a-1]$, before erasing
any colour from $V$. Such a sequence can always be ``shortcut'' to a sequence that is no longer but that has $x\in[a-1]$.) Similarly,
we assume $y\in[b-1]$. So we must show that the minimum value of~\eqref{gen-lb}, when $x\in[a-1]$ and $y\in[b-1]$ is at least
the value claimed in the theorem. We have 2 subcases, depending on whether or not 
$q\ge \ceilfrac{k}2p$.

First suppose that $q\ge \ceilfrac{k}2p$.
 Since $\max\{x,y\} \le b-1 \le \ceilfrac {k}{2}-1\le \frac{q-p}{p}$, we see that $(k-x-y-1) \frac{q-p}{ab}+xy \frac{p}{ab}$ is non-increasing in $x$ and $y$; specifically, when we increase $x$ or $y$, the increase in the second term is at most the decrease in 
the first term. So the minimum is attained when both $x$ and $y$ are maximal, i.e. $x=a-1$ and $y=b-1$.
 In that case $(k-x-y-1)\frac{q-p}{ab}+xy \frac{p}{ab}= \frac{q-p}{ab}+(ab-k+1) \frac{p}{ab}
=p+\frac{q-kp}{ab} =p+\frac{q-kp}{\floorfrac{k^2}4}$.

Now suppose instead that $q < \ceilfrac{k}2p$.
Consider an arbitrary pair $(x,y)$. If $x\notin\{1,a-1\}$, then we can either increase or decrease $x$ without increasing the value of \eqref{gen-lb}; by repeating this process, we can assume that $x\in\{1,a-1\}$ and, similarly, that $y\in\{1,b-1\}$. 

Due to the symmetry of~\eqref{gen-lb} in $x$ and $y$, evaluating~\eqref{gen-lb} with $(x,y)=(a-1,1)$ gives the same value as evaluating with $(x,y)=(1,a-1)$. So the case $(x,y)=(a-1,1)$ is settled once we have settled the case $(x,y)\in\{(1,b-1),(1,1)\}$.
Thus, we assume that $(x,y)\in\{(1,1),(1,b-1),(a-1,b-1)\}$.
When $(x,y)=(a-1,b-1)$, as in the previous paragraph, we get the lower bound $p+\frac{q-kp}{ab}$. Since $q<\ceilfrac{k}2p=bp$, we have $aq-q = (a-1)q<(a-1)bp = abp-bp$. Thus, we get $p+\frac{q-kp}{ab} = \frac{pab + q - kp}{ab} > \frac{(aq-q+bp) + q - kp}{ab} = \frac{aq-ap}{ab} = \frac{q-p}b = \frac{q-p}{\ceilfrac{k}2}$, as desired.
When $(x,y)=(1,b-1)$, we get $(k-x-y-1) \frac{q-p}{ab}+xy \frac{p}{ab}=(a-1)\frac{q-p}{ab}+\frac{(b-1)p}{ab}=
\frac{q-p}{b}+\frac{bp-q}{ab}>\frac{q-p}{b}.$
 When $(x,y)=(1,1)$, the conclusion is immediate if $b\ge 3$, since then
 $\frac{(a+b-3)(q-p)}{ab} \ge \frac{q-p}{b}$.
 Instead suppose $a=b=2$. Now \eqref{gen-lb} gives the lower bound $2p+q + \frac{q-p}4+\frac{p}4=2p+q+\frac{q}4$.
By assumption $2p = \ceilfrac{k}2p > q$. Thus, $\frac{q}4 > \frac{q}4+\frac{q-2p}4 = \frac{q-p}2 = \frac{q-p}{\ceilfrac{k}2}$. 
\smallskip

The paragraph above finishes the proof when $ab$ divides both $p$ and $q$. 
Now we handle the general case. \begin{itemize}
 \item We begin with the first regime; that is, when $q\ge kp$. We show that in the lower bound we do not lose anything due to rounding except for what comes from the floor (in the statement of the theorem). In the construction of $\a$ and $\b$, we make each set $U_{1,j}$ of size $\ceilfrac{p}{ab}$ or size $\floorfrac{p}{ab}$, and each set $V_{1,j}$ of size $\ceilfrac{q}{ab}$ or size $\floorfrac{q}{ab}$. Furthermore, we take the larger sets (those of size $\ceilfrac{p}{ab}$ or size $\ceilfrac{q}{ab}$) to be the ones with smaller indices. Now it is straightforward to check that $|V_{1,j}|-|U_{1,j}|-\floorfrac{q-p}{ab}\ge 0$ for all $j\in[2,k]$. (For example, it is enough to consider 2 cases based on whether or not $q\pmod b \ge p\pmod b$.)

Recall that when $q\ge kp$, we let $a:=1$ and $b:=k-1$. Thus, we must be in Case 1; that is, colour 1 is not the first colour erased. In other words, $X_{j<1}=1$ for some $j\in[2,k]$. Now the argument proving~\eqref{eq:LB_recol} can be refined to give the following lower bound:
\begin{align*}
d(\a,\b) & = \sum_{j=2}^k\left(|U_{1,j}|+|V_{1,j}| + (|U_{1,j}|X_{1<j}+|V_{1,j}|X_{j<1})\right) \\
& =|U|+|V|+\sum_{j=2}^k\left((|U_{1,j}|X_{1<j}+|U_{1,j}|X_{j<1})+(|V_{1,j}|-|U_{1,j}|)X_{j<1}\right) \\
& = 2|U|+|V| + \sum_{j=2}^k(|V_{1,j}|-|U_{1,j}|)X_{j<1}\\
& \ge 2p + q + \min_{j\in[2,k]}(|V_{1,j}|-|U_{1,j}|) \\
&\ge 2p+q+\floorfrac{q-p}{b}.
\end{align*}

\item Now we consider the second regime; in particular, $q \ge \ceilfrac k2 p$. If $q \equiv p \pmod{ \floorfrac{k^2}{4}}$, then instead of a term $xy \frac{p}{ab}$ (in the calculation above), we have $xy$ terms that are each equal to $\floorfrac{p}{ab}$ or $\ceilfrac{p}{ab}$.
So the number of terms that are rounded is
$xy\le(a-1)(b-1)= ab-k+1$; thus, $g_1(k,p,q)< ab-k+1.$
Next, we consider the case when $q \not\equiv p \pmod{ \floorfrac{k^2}{4}}$.
Let $q'$ be the largest integer which is at most 
$q$ and satisfies $q' \equiv p \pmod{ \floorfrac{k^2}{4}}$. In that case, we obtain a lower bound on the diameter observing that $\diam \C_k(K_{p,q})\ge \diam \C_k(K_{p,q'})+(q-q')$; we let the $q-q'$ additional vertices satisfy $\a(v) \not= \b(v)$.
By comparing the main terms in the estimate of the diameter, we can conclude an upper bound on $g_1(k,p,q).$ For this, we check two cases.

If $\ceilfrac{k}2 p \le q'$, then
we get $g_1(k,p,q) \le g_1(k,p,q')+ 1$, since $\floorfrac{q-kp}{ab}-\floorfrac{q'-kp}{ab} \le 1$.
So assume instead that $q'< \ceilfrac{k}2 p \le q$. 
We have analogously $g_1(k,p,q) \le g_1(k,p,\ceilfrac{k}2 p) +1=g_2(k,p,\ceilfrac{k}2 p) +1\le \floorfrac k2 +1. $
For this equality, note that in~\cref{thr:main_diamCk(Kpq)} the bounds for the second and third regimes are equal when $q=\ceilfrac{k}2 p$.  For this second inequality, we applied the upper bound for $g_2$, which we prove in the following paragraph.

\item Finally, we consider the third regime. If $q \equiv p \pmod{ \floorfrac{k^2}{4}},$ 
then we can ensure that
$\abs{V_{i,j}}-\abs{U_{i,j}}=\frac{q-p}{ab}$ for every $(i,j) \in [a] \times [a+1,k]$.
Now the analysis is nearly identical to that above when $p$ and $q$ are both divisible by $\floorfrac{k^2}4$; consequently, $g_2(k,p,q)=0$.
In the more general case, let $q:=q'+r$, where $q' \equiv p \pmod{\floorfrac{k^2}{4}}$ and $ 0 \le r < \floorfrac{k^2}{4}$.
Now $\diam \C_k(K_{p,q}) \ge \diam \C_k(K_{p,q'})+r$.
This holds because any pair of colourings $(\a,\b)$ of $K_{p,q'}$ can be extended to colourings of $K_{p,q}$ such that each of the additional $r$ vertices has different colours in $\a$ and in $\b$.
In the third regime, $\diam \C_k(K_{p,q}) \ge \diam \C_k(K_{p,q'})+r$ implies $g_2(k,p,q) \le \floorfrac{q-p}{b}-\floorfrac{q'-p}{b} \le \ceilfrac{q-q'}{\ceilfrac k2} \le \floorfrac k2.$ \qedhere
\end{itemize}
\end{proof}
Now we can sketch the proof of Proposition~\ref{k=3:prop}.  For convenience, we restate it.
\begin{proptwo}
   For every complete bipartite graph $K_{p,q}$, we have $\diam \C_3(K_{p,q})=\floorfrac{3(p+q)}{2}$.
\end{proptwo}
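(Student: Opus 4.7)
My plan is to establish matching upper and lower bounds of $\lfloor 3(p+q)/2\rfloor$, first observing that at $k=3$ the three regime-specific expressions in Theorem~\ref{thr:main_diamCk(Kpq)} all reduce to this value --- a routine check gives
$$2p+q+\lfloor(q-p)/2\rfloor\;=\;3p+q+\lfloor(q-3p)/2\rfloor\;=\;\lfloor 3(p+q)/2\rfloor$$
by absorbing the integer parts into the floor. So the proposition reduces to establishing matching bounds and, equivalently, showing $g_1(3,p,q)=g_2(3,p,q)=0$.

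For the lower bound, I would specialize the just-completed proof of the lower bound of Theorem~\ref{thr:main_diamCk(Kpq)} to $k=3$. Since $a=\lfloor k/2\rfloor=1$, the ``Case 2'' branch (which requires $a\geq 2$) is vacuous; in ``Case 1'', colour $1$ cannot be the first erased, because $[a]\setminus\{1\}=\emptyset$, so some colour on $V$ must be freed before $U$ can be recoloured at all. Over the four feasible orderings of the first-erasure events (colour $1$ from $U$; colours $2,3$ from $V$), for the extremal pair --- $\alpha(U)=\{1\}$ with $V$ split as evenly as possible between $2$ and $3$, and $\beta$ the symmetric swap --- the minimum extra contribution is $\min(\lceil q/2\rceil+\lfloor p/2\rfloor,\,\lfloor q/2\rfloor+\lceil p/2\rceil)=\lfloor(p+q)/2\rfloor$. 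Adding the base $p+q$ gives $\lfloor 3(p+q)/2\rfloor$ exactly, with no rounding loss.

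For the upper bound, I would perform a case analysis on the structures of $\alpha$ and $\beta$ (each assigns $1$ or $2$ colours to each part, with the colour sets on $U$ and $V$ disjoint). The key observation is that in each case one can exhibit a pair of \emph{complementary} recolouring strategies whose total lengths sum to exactly $3(p+q)$, so the minimum of the two is at most $\lfloor 3(p+q)/2\rfloor$. For example, when $\alpha$ has $U$ coloured $1$ and $V=V_2\cup V_3$ split between colours $2$ and $3$, and $\beta$ is the $2$-colouring $U\to 2, V\to 1$, the ``clear $V_2$ first'' strategy uses $|V_2|+p+q$ steps, while the complementary ``clear $V_3$ first, then swap via colour $3$'' uses $|V_3|+2p+q$ steps, the two summing to $3(p+q)$. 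Similar complementary pairs arise when both $\alpha,\beta$ are $2$-colourings (the two swaps going via the third colour on $U$ or on $V$) and when both use all three colours. The main obstacle is verifying the sum-to-$3(p+q)$ property in every configuration --- particularly the configuration where $\alpha$ and $\beta$ both use all three colours with opposite orientations (one with $2$ colours on $U$, the other with $2$ colours on $V$); there the complementary pair arises naturally from swapping the roles of the two non-primary colours in one of the two strategies, and a careful case check completes the argument.
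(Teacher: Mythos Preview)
Your approach is correct and essentially matches the paper's, though organized a bit differently.

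For the lower bound, the paper's proof of Proposition~2 makes exactly your observation: when $k=3$ the construction with $a=1$ forces Case~1 (since Case~2 needs $a\ge 2$), and that first-regime argument works for all $p,q$, yielding $2p+q+\lfloor(q-p)/2\rfloor=\lfloor 3(p+q)/2\rfloor$. The paper does not redo the explicit ordering analysis you sketch; it simply points back to the already-completed general lower-bound proof and notes that the $a=1$ branch applies regardless of whether $q\ge 3p$.

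For the upper bound, the paper's proof of Proposition~2 just invokes Theorem~\ref{thr:main_diamCk(Kpq)_upp} (proved afterwards), observing that all three regime expressions equal $3(p+q)/2$. Your direct complementary-strategies argument is in fact precisely what the paper does in the \emph{base case} $k=3$ of that theorem's inductive proof: with $C_1=\a(V)\cap\b(V)$, $C_2=\a(V)\setminus C_1$, $C_3=\b(V)\setminus C_1$ partitioning $[3]$, two explicit recolouring sequences give the bounds $p+|\a^{-1}(C_2)|+|\b^{-1}(C_3)|$ and $2p+q+|\a^{-1}(C_1)|+|\b^{-1}(C_1)|$, which sum to $3(p+q)$. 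The $C_1,C_2,C_3$ framework collapses your several structural subcases into essentially one computation (plus Proposition~\ref{prop:caseC_1emptyset} when $C_1=\emptyset$), so it is cleaner than the case split you anticipate having to carry out. One minor point: your claim that complementary pairs always sum to \emph{exactly} $3(p+q)$ is slightly too strong in degenerate configurations (e.g.\ when some colour is absent from $V$ under both $\a$ and $\b$); ``at most $3(p+q)$'' is what holds and is all you need.
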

\begin{proof}
   Let $k=3$.  Note that each of the three upper bounds in Theorem~\ref{thr:main_diamCk(Kpq)} (for easier reference, see Theorem~\ref{thr:main_diamCk(Kpq)_upp} below) is equal to $3(p+q)/2$; these bounds are $2p+q+(q-p)/2$ and $3p+q+(q-3p)/2$ and $2p+q+(q-p)/2$.  
   Since the diameter is always an integer, in all 3 regimes Theorem~\ref{thr:main_diamCk(Kpq)_upp} gives the desired upper bound.  So we just need a matching lower bound.  For this, we note that the lower bound in the previous proof, stated for the case $q\ge kp$, actually works for all values of $p$ and $q$.  (Although when $k\ge 4$ and $q<kp$, it is not sharp.) But here it matches our upper bound for all $p$ and $q$.
\end{proof}

\subsection{Upper Bounds for \texorpdfstring{$K_{p,q}$}{K-p,q}}
\label{upper:ssec}

In the remainder of this section, we prove the upper bounds in Theorem~\ref{thr:main_diamCk(Kpq)}.
First, we prove a proposition and a lemma that will be used in the proofs of the three statements of the theorem.
\begin{defi}
\label{Kpq-defi}
Fix a complete bipartite graph $K_{p,q}$, where $p$ and $q$ are positive integers. 
By symmetry, we assume $p\le q$. Fix an integer $k\ge 3$
and $[k]$-colourings $\a$ and $\b$ of $K_{p,q}$. Denote the parts of $K_{p,q}$ by $U$ and $V$, 
with $|U|=p$ and $|V|=q$. Let $C_1:=\a(V)\cap \b(V)$, let $C_2:=\a(V)\setminus C_1$, and let $C_3:=\b(V)\setminus C_1$.
By symmetry, we assume that $\lvert C_3\rvert \ge \lvert C_2\rvert$; if not, then we swap $\a$ and $\b$.
\end{defi}

\begin{lem}\label{lem:V_uses-all}
Fix positive integers $p,q,k$ and $[k]$-colourings $\a$ and $\b$ as in Definition~\ref{Kpq-defi}. 
If there exists $c\in [k]$ used on $V$ by neither $\a$ nor $\b$, then $d(\a, \b) \le 2p+q.$ 
\end{lem}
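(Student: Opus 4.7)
The plan is to exploit the hypothesis by using $c$ as a temporary ``parking'' colour for the smaller part $U$. Since $c$ appears on $V$ in neither $\a$ nor $\b$, any vertex of $U$ may safely be assigned colour $c$ both at the start and after $V$ has been fully recoloured to $\b$. This suggests a natural three-phase recolouring scheme: first flood $U$ with the single colour $c$, then update $V$ vertex by vertex to its $\b$-colouring, then flood $U$ again from $c$ to its $\b$-colouring.

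Concretely, I would execute the following three phases. In Phase~1, for each $u\in U$ with $\a(u)\neq c$, recolour $u$ to $c$; this uses at most $p$ steps, and every intermediate colouring is proper because $V$ is still coloured by $\a$ and $c\notin \a(V)$. In Phase~2, for each $v\in V$ with $\a(v)\neq \b(v)$, recolour $v$ to $\b(v)$; this uses at most $q$ steps, and validity holds because $U$ is monochromatic in $c$ throughout, while $\b$ never assigns $c$ to a vertex of $V$. In Phase~3, for each $u\in U$ with $\b(u)\neq c$, recolour $u$ from $c$ to $\b(u)$; this uses at most $p$ steps, and propriety is immediate since $V$ now has its target $\b$-colouring and $\b$ is proper on $K_{p,q}$. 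Summing gives at most $p+q+p=2p+q$ recolouring steps, which matches the claimed bound.

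I do not anticipate a significant obstacle; the lemma is essentially a direct construction. The only point to verify carefully is propriety during Phase~2, when $U$ is entirely one colour and each $v\in V$ needs to change to $\b(v)$ without conflict with its $U$-neighbours. The hypothesis that neither $\a$ nor $\b$ uses $c$ on $V$ is exactly what guarantees this, both when entering Phase~2 and when exiting it into Phase~3. Note that each vertex of $V$ is recoloured at most once and each vertex of $U$ is recoloured at most twice, which is the source of the $2p+q$ bound.
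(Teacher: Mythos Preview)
Your proposal is correct and follows essentially the same three-phase approach as the paper: recolour all of $U$ to $c$, then recolour $V$ to $\b$, then recolour $U$ to $\b$. You have simply spelled out more carefully why each intermediate colouring remains proper and noted that vertices already at their target colour need not be touched.
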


\begin{proof}
Recolour each vertex of $U$ with
$c$, recolour each $v\in V$ with $\b(v)$, and recolour each $u\in U$ with $\b(u)$. The number of recolouring steps is at most $p+q+p=2p+q$.
\end{proof}

\begin{remark}
\label{nonempty-rem}
Throughout this section, we assume Definition~\ref{Kpq-defi}. When proving the upper bounds for Theorem~\ref{thr:main_diamCk(Kpq)}, we assume that $\a(V)\cup \b(V) = [k]$, since otherwise we are done by Lemma~\ref{lem:V_uses-all}. That is, $C_1$, $C_2$, $C_3$ partition $[k]$.
Note also that $C_2\ne \emptyset$ and $C_3\ne\emptyset$, since $U$ is coloured under both $\b$ and $\a$.
\end{remark}

\begin{prop}\label{prop:caseC_1emptyset}
Fix integers $p,q,k$, the graph $K_{p,q}$, and $[k]$-colourings $\a$ and $\b$, as in \cref{Kpq-defi}.
Assume that $\a(U)=\b(V)=C_3$ and $\a(V)=\b(U)=C_2,$ where $C_2 \cup C_3=[k]$. If $\lvert C_3 \rvert \ge \lvert C_2 \rvert$,
then $$d(\a,\b) \le q+2p+ \min \left\{\frac{q-p}{|C_3|}, \frac{q-kp}{|C_2||C_3|}+p \right\}.$$
\end{prop}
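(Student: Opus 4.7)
The plan is to construct two explicit recolouring sequences—one matching each term inside the minimum—and to let the proposition follow by taking whichever one is shorter. Throughout, I will exploit the bipartite structure of $K_{p,q}$: feasibility of a recolouring step on a vertex of $V$ depends only on the current colour set on $U$, and vice versa. This lets me describe each strategy by tracking just the two colour sets after each phase.

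For the first bound, I will choose a ``parking'' colour $c^*\in C_3$ (to be optimised at the end) and use five phases. First, park all of $U$ at $c^*$, which is valid since $V$ uses only colours in $C_2$. Then move the $V$-vertices destined in $\beta$ for a colour other than $c^*$ directly to their target, since $U$ is now uniformly $c^*$. Next, send the $V$-vertices destined for $c^*$ to an auxiliary $c'\in C_3\setminus\{c^*\}$, which exists since $|C_3|\ge 2$. Then move $U$ from $c^*$ to its $\beta$-colouring in $C_2$, now valid since $V$ lies entirely in $C_3$. Finally, move the parked $V$-vertices from $c'$ to $c^*$. A routine phase-by-phase count yields total cost $2p+q+|V\cap\beta^{-1}(c^*)|-|U\cap\alpha^{-1}(c^*)|$, and averaging over $c^*\in C_3$ (using $\sum_{c\in C_3}(|V\cap\beta^{-1}(c)|-|U\cap\alpha^{-1}(c)|)=q-p$) furnishes a choice making this at most $2p+q+(q-p)/|C_3|$.

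For the second bound, I will partition $V$ into cells $V_{i,j}:=V\cap\alpha^{-1}(i)\cap\beta^{-1}(j)$ for $(i,j)\in C_2\times C_3$, choose an optimisable pair $(i^*,j^*)$, and run six phases. First, park $U$ at $j^*$. Second, process the $V$-vertices with $\alpha$-colour $i^*$, moving each directly to its $\beta$-colour when that lies in $C_3\setminus\{j^*\}$, and sending the cell $V_{i^*,j^*}$ to an intermediate $c'\in C_3\setminus\{j^*\}$. Third, shift $U$ from $j^*$ to $i^*$, which is valid because the previous phase has cleared $i^*$ from $V$ and $V$ never used $j^*$. Fourth, for each $i\in C_2\setminus\{i^*\}$, move the $V$-vertices with $\alpha(v)=i$ directly to $\beta(v)\in C_3$; this is valid because $U$ sits at $i^*\notin C_3$. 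Fifth, bring $V_{i^*,j^*}$ from $c'$ to its final colour $j^*$, valid since $U$ is at $i^*\ne j^*$. Sixth, move $U$ from $i^*$ to its $\beta$-colouring. Summing gives the total $3p+q+|V_{i^*,j^*}|-|U\cap\alpha^{-1}(j^*)|-|U\cap\beta^{-1}(i^*)|$. Averaging over $(i^*,j^*)\in C_2\times C_3$, using $\sum_{i,j}|V_{i,j}|=q$ and $\sum_j|U\cap\alpha^{-1}(j)|=\sum_i|U\cap\beta^{-1}(i)|=p$, the relevant quantity sums to $q-ap-bp=q-kp$ over the $ab$ pairs, so some pair attains a value of at most $(q-kp)/(ab)$, giving the second bound.

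The main obstacle will be ensuring feasibility at every single step of the second strategy, since $V_{i^*,j^*}$ is the one cell moved twice and the intermediate colour $c'$ must stay free on $U$ both when it is placed there (phase 2) and when it is removed (phase 5); moreover, the pivotal move of $U$ from $j^*$ to $i^*$ requires careful control of which colours remain on $V$ at that moment. Choosing $c'\in C_3\setminus\{j^*\}$ and following the phase order above is exactly what handles these constraints, but verifying it requires carefully listing the colour subsets used on $U$ and on $V$ after each phase. Once that bookkeeping is done, the two averaging arguments—over $C_3$ for the first strategy, over $C_2\times C_3$ for the second—are immediate.
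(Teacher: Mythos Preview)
Your proposal is correct and follows essentially the same approach as the paper: construct explicit recolouring sequences via parking $U$ at a single colour, then use an averaging argument to choose the parking colour(s) optimally. The paper packages both strategies into one parametrised sequence (with subsets $C_2'\subseteq C_2$ and $C_3'\subsetneq C_3$) and then specialises to $(|C_2'|,|C_3'|)=(|C_2|,|C_3|-1)$ and $(|C_2'|,|C_3'|)=(1,|C_3|-1)$, which are precisely your first and second strategies; the step counts and the averaging computations coincide.
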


\begin{proof}
Fix a non-empty subset $C'_3\subset C_3$ (not equal to the whole set $C_3$; this is possible, since $|C_3|\ge \lceil k/2\rceil\ge 2$)
and a non-empty subset $C'_2 \subseteq C_2$ (possibly equal to $C_2$).

We now sketch a possible series of recolourings from $\a$ to $\b.$
First recolour $\a^{-1}(C'_3) \subset U$ (with colours from $C_3 \backslash C'_3$).
Then recolour each vertex $v \in \a^{-1}(C'_2) \subset V$, with its colour $\b(v)$ whenever $\b(v) \in C'_3$, and otherwise recolour each such $v$ with an arbitrary colour in $C'_3 \cup (C_2\setminus C_2')$.
Now recolour all of $U$ with $C'_2$, such that $U$ does not use any colour of $C_3$, and such that each $u \in U$ is coloured with $\b(u)$ if possible.
Next, we recolour each remaining vertex $v\in V$ with $\b(v)$; finally, we recolour each vertex $u\in U$ with $\b(u)$.

In total, the number of steps we used is $\lvert \a^{-1}(C_3')\rvert + \lvert\a^{-1}(C_2')\rvert + \lvert\a^{-1}(C_2\setminus C_2')\rvert + \lvert\a^{-1}(C_2')\cap \b^{-1}(C_3\setminus C_3')\rvert + \lvert\b^{-1}(C_2\setminus C_2')\rvert = p+q+\lvert \a^{-1}(C'_3) \rvert + \lvert \a^{-1}(C'_2) \cap \b^{-1}(C_3 \backslash C'_3) \rvert+\lvert \b^{-1}(C_2 \backslash C'_2)\rvert$.

Rather than fix sets $C_2'$ and $C_3'$, we just fix their sizes $|C_2'|$ and $|C_3'|$.
Let 
$f_2:=\frac{| C'_2|}{\abs{ C_2}}$ and 
$f_3:=\frac{|C_3 \backslash C'_3|}{\abs{ C_3}}$.
Averaging over all possible sets $C'_2$ and $C'_3$ of these fixed sizes, 
the number of steps we use is
$p+q+(1-f_3)p+f_2f_3q+(1-f_2)p=2p+q+f_2f_3q+(1-f_2-f_3)p$.
Doing this for $\lvert C'_2\rvert=\lvert C_2\rvert$ and $\lvert C'_3 \rvert =\lvert C_3 \rvert-1$, as well as for $\lvert C'_2 \rvert =1$ and $\lvert C'_3 \rvert =1$ results in 
$$d(\a,\b) \le 2p+q+\min \left\{\frac{q-p}{|C_3|}, \frac{q}{|C_2||C_3|}+ \left(1-\frac{1}{|C_2|}-\frac{1}{|C_3|} \right)p \right\}.$$

Noting that $\frac{1}{|C_2|}+\frac{1}{|C_3|}=\frac{|C_2|+|C_3|}{|C_2||C_3|}$ and $|C_2|+|C_3|=k$ results in the formulation of the proposition.
\end{proof}

We are now ready to prove the upper bounds of~\cref{thr:main_diamCk(Kpq)}. For ease of presentation, we restate the upper bounds. Since the diameter is always an integer, the bounds also hold when taking the floor function.

\begin{theorem}\label{thr:main_diamCk(Kpq)_upp}
Fix positive integers $p,q,k$. If $p \le q$ and $k\ge 3$, then 
$$\diam\C_k(K_{p,q})\le 
\left.
 \begin{cases}
 2p+q+\frac{q-p}{k-1} & \text{if } q \ge kp \\ \\
 3p+q+\frac{q-kp}{\floorfrac{k^2}{4} } & \text{if } \ceilfrac {k}{2} p \le q \le kp,\\
 \\
 2p+q+\frac{q-p}{\ceilfrac {k}{2} }& \text{if } q \le \ceilfrac {k}{2} p.
 \end{cases}
 \right.$$
\end{theorem}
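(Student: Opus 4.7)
The plan is to prove all three upper bounds at once by extending the averaging argument of Proposition~\ref{prop:caseC_1emptyset} to arbitrary pairs $(\a,\b)$, rather than the extremal ``swapped'' pairs considered there. By Lemma~\ref{lem:V_uses-all} and Remark~\ref{nonempty-rem}, I may assume that every colour in $[k]$ appears on $V$ under $\a$ or $\b$, so $[k]=C_1\sqcup C_2\sqcup C_3$; since $\a,\b$ are proper, $\a(U)\subseteq C_3$ and $\b(U)\subseteq C_2$, and by the symmetric convention $|C_3|\ge|C_2|$.

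For each choice of non-empty subsets $C_2'\subseteq C_2$ and $C_3'\subsetneq C_3$, I would construct a five-stage candidate recolouring sequence that mimics the proof of Proposition~\ref{prop:caseC_1emptyset}: (i) recolour $\a^{-1}(C_3')\subseteq U$ using colours in $C_3\setminus C_3'$; (ii) recolour the vertices in $\a^{-1}(C_2')\subseteq V$, sending each to its $\b$-colour when that colour lies in $C_3'\cup C_1$, else to some colour in $C_3'$; (iii) recolour all of $U$ into $C_2'$, matching $\b$ where possible and avoiding $C_3$; (iv) finalise the remaining vertices of $V$; (v) finalise $U$. With $f_2:=|C_2'|/|C_2|$ and $f_3:=|C_3\setminus C_3'|/|C_3|$, averaging the step count over all choices of $C_2',C_3'$ of fixed sizes yields $2p+q+f_2f_3\,q+(1-f_2-f_3)\,p$, up to an adjustment for $V$-vertices already coloured in $C_1$ (which are recoloured at most once, not twice). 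Specialising to $(|C_2'|,|C_3'|)=(|C_2|,|C_3|-1)$ and to $(|C_2'|,|C_3'|)=(1,1)$ gives, just as in Proposition~\ref{prop:caseC_1emptyset}, the bound $d(\a,\b)\le 2p+q+\min\bigl\{\tfrac{q-p}{|C_3|},\,\tfrac{q-kp}{|C_2|\cdot|C_3|}+p\bigr\}$.

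To conclude regime-by-regime: for Regime 3 ($q\le\ceilfrac{k}{2}p$), the first term of the minimum gives $\tfrac{q-p}{|C_3|}\le \tfrac{q-p}{\ceilfrac{k}{2}}$, since $|C_3|\ge \ceilfrac{k}{2}$. For Regime 1 ($q\ge kp$), the second term gives $\tfrac{q-kp}{|C_2|\cdot|C_3|}+p\le \tfrac{q-p}{k-1}$ whenever $|C_2|\cdot|C_3|\ge k-1$, which holds whenever $|C_2|,|C_3|\ge 1$ and $|C_2|+|C_3|=k$. For Regime 2 ($\ceilfrac{k}{2}p\le q\le kp$) the second term gives $\tfrac{q-kp}{\floorfrac{k^2}{4}}+p$ provided $|C_2|\cdot|C_3|\ge\floorfrac{k^2}{4}$, i.e.\ when $|C_2|,|C_3|$ are as balanced as possible.

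The main obstacle will be handling the case $|C_1|>0$ rigorously: here $|C_2|+|C_3|=k-|C_1|<k$, so the effective sizes can fall short of the ``ideal'' values (namely $k-1$ for Regime 1, and $\floorfrac{k}{2},\ceilfrac{k}{2}$ for Regime 2), and the inequalities in the previous paragraph can fail if applied naively. The fix is to track carefully the savings arising from $C_1$: a vertex of $V$ coloured with some $c\in C_1$ under both $\a$ and $\b$ contributes nothing to the recolouring count, while a vertex coloured $c\in C_1$ in only one of $\a,\b$ contributes at most one step; these savings scale with $|C_1|$ and compensate for the shortfall in $|C_2|\cdot|C_3|$. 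A secondary subtlety is that $\a(U)$ or $\b(U)$ may be strict subsets of $C_3$ or $C_2$, so colours there can serve as ``parking slots'' at no cost; this only helps and is easily absorbed into the averaging.
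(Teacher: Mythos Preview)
Your plan is sound exactly in the case $|C_1|=0$, where it recovers Proposition~\ref{prop:caseC_1emptyset}. The gap is in the claim that ``savings from $C_1$'' compensate for the shortfall in $|C_2|\cdot|C_3|$ (or in $|C_3|$) when $|C_1|>0$. These savings scale with the number of \emph{vertices} of $V$ coloured in $C_1$, not with the number of \emph{colours} $|C_1|$---and the former can be tiny even when the latter is large. Concretely, take $k=5$, $C_1=\{1,2\}$, $C_2=\{3\}$, $C_3=\{4,5\}$, and let $\a$ colour two vertices of $V$ with $1,2$ and the remaining $q-2$ with $3$, while $\b$ colours two (different) vertices of $V$ with $1,2$ and the rest with $4$ or $5$. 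Then $|A_{2,3}|\approx q$, $q_0=0$, and your only viable specialisation $(|C_2'|,|C_3'|)=(1,1)$ yields roughly $2p+q+\tfrac{q-p}{2}$, whereas in Regime~1 you need $2p+q+\tfrac{q-p}{4}$. The savings you identify contribute only $O(1)$ here and cannot close this $\Theta(q)$ gap. Worse, when $|C_3|=1$ (e.g.\ $k=4$, $|C_1|=2$), neither of your two specialisations is even well-defined, since both require $\emptyset\ne C_3'\subsetneq C_3$.

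The paper's proof handles $|C_1|\ge 1$ by a genuinely different mechanism: it picks a colour $c\in C_1$ and plays off two recolouring strategies against each other. One (\eqref{eq:constr2}) strips $c$ and reduces by induction to $k-1$ colours on $K_{p,q-y}$, paying $y-y_0$ extra steps where $y=|\a^{-1}(c)\cup\b^{-1}(c)|$ and $y_0=|\a^{-1}(c)\cap\b^{-1}(c)|$; the other (\eqref{eq:constr1}) uses $c$ as a pivot on $U$ and pays $2p+q+y+y_0$. The point is that \eqref{eq:constr1} is cheap exactly when $y$ is small (your bad example), while \eqref{eq:constr2} is cheap when $y$ is large; a weighted combination of the two, followed by a nine-case analysis comparing the regimes for $q$ and for $q-y$, closes the induction. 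Your single five-stage template cannot replace this trade-off, because it has no analogue of the ``pivot through $c$'' move that neutralises configurations with large $|C_1|$ but few $C_1$-coloured vertices.
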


\begin{proof}
 Fix $K_{p,q}$, $[k]$, $\a$, $\b$, $C_1$, $C_2$, $C_3$ as in Definition~\ref{Kpq-defi}.

 If $\abs{C_1}=0$, then we are done by~\cref{prop:caseC_1emptyset}; below, we check this for the 3 regimes:

 \begin{itemize}
 \item In the first regime, it is sufficient to note that $q-kp \ge 0$ and $\abs{C_3} \abs{C_2} \ge k-1$. Together, these imply that 
 $\frac{q-kp}{|C_2||C_3|}+p \le \frac{q-kp}{k-1}+p=\frac{q-p}{k-1}$.
 \item In the second regime, since $q-kp\le 0$ and $|C_2||C_3|\le \floorfrac{k^2}{4}$, we have
 $$\frac{q-kp}{|C_2||C_3|}+p \le p+ \frac{q-kp}{\floorfrac{k^2}{4} }.$$
 \item In the third regime, it is sufficient to note that $\abs{C_3} \ge \lceil k/2\rceil$ and thus 
 $\frac{q-p}{|C_3|}\le \frac{q-p}{\ceilfrac {k}{2}}$.
 \end{itemize}

 \noindent
 Thus, we assume below that $\abs{C_1}\ge 1$.

 We proceed by induction on $k$.
 The base case is $k=3$, and thus $\abs{C_1}=\abs{C_2}=\abs{C_3}=1.$
 When $k=3$, we also denote by $C_i$ the single colour of each set $C_i$.
 We describe $2$ recolouring sequences; by an averaging argument we will show that at least one of them has length at most $p+q+\frac{p+q}2$ (when $k=3$, all $3$ upper bounds above are equal).

 As a first method to recolour $\a$ into $\b$, we: (i) recolour $\a^{-1}(C_2)$ with $C_1$, (ii) recolour $U$ with $C_2$, and (iii) recolour $\b^{-1}(C_3)$ with colour $C_3.$
This implies
\begin{equation}\label{eq:k=3_1}
d(\a,\b)\le p+ \abs{\a^{-1}(C_2)} + \abs{\b^{-1}(C_3)}.
\end{equation}

As a second method, we: (i) recolour $\a^{-1}(C_1)$ with $C_2$, (ii) recolour each $u\in U$ with $C_1$, 
(iii) recolour each $v\in V$ with $C_3$, (iv) recolour each $u\in U$
with $\b(u)=C_2$, and (v) recolour each $v\in \b^{-1}(C_1)$ with $\b(v)=C_1$. The number of steps this takes is 
$\lvert \a^{-1}(C_1)\rvert + p + q + p + \lvert\b^{-1}(C_1)\rvert$. That is, 
\begin{equation}\label{eq:k=3_2}
 d(\a,\b)\le 2p+q + \lvert\a^{-1}(C_1)\rvert
+\lvert \b^{-1}(C_1)\rvert.
\end{equation}

 Since $|\a^{-1}(C_1)| + |\a^{-1}(C_2)|=|V|=|\b^{-1}(C_1)| + |\b^{-1}(C_3)|$, 
 adding~\eqref{eq:k=3_1} and~\eqref{eq:k=3_2} gives $2d(\a,\b) \le 3(p+q)$, as desired.
 This concludes the base case.

For the induction step, we assume $k \ge 4$ and the statement is true for $k-1$ colours.
For a fixed colour $c \in C_1,$ we let $y_0 := \abs{ \a^{-1}(c) \cap \b^{-1}(c) }$, 
 $y_1 := \abs{ \a^{-1}(c) \setminus \b^{-1}(c) }$, $y_2 := \abs{ \b^{-1}(c) \setminus \a^{-1}(c) }$, 
 and $y:=\abs{ \a^{-1}(c) \cup \b^{-1}(c) }=y_0+y_1+y_2.$
Again we will use two recolouring sequences, which imply the following two statements.

 \begin{claim}
 \begin{equation}\label{eq:constr2} d(\a,\b)\le \diam \C_{k-1}(K_{p,q-y})+ y-y_0\end{equation}
 \end{claim}

 \begin{claimproof}
 We (i) recolour $ \b^{-1}(c) \setminus \a^{-1}(c)$ in colour $c$.
     Now (ii) every vertex $w$ in $U \cup (V \setminus (\b^{-1}(c) \cup \a^{-1}(c)))$ can be recoloured into $\b^{-1}(u)$ using at most $\diam \C_{k-1}(K_{p,q-y})$ steps.
 Finally, we (iii) recolour every $ v \in \a^{-1}(c) \setminus \b^{-1}(c) $ with $\b(v)$.
 In total, the number of steps we use is at most $y_2+\diam \C_{k-1}(K_{p,q-y})+y_1$. 
 \end{claimproof}

 \begin{claim}
 \begin{equation}\label{eq:constr1} d(\a,\b)\le 2p+q+y+y_0\end{equation}
 \end{claim}

 \begin{claimproof}
 We (i) recolour $\a^{-1}(c)$ from $C_2$, (ii) recolour $U$ with $c$, (iii) recolour every 
 $v \in V\setminus \b^{-1}(c)$ with $\b(v)$, (iv) recolour every $v\in \b^{-1}(c)$ with an arbitrary colour from $C_3$, 
 (v) recolour every $u \in U$ with $\b(u)$, and (vi) recolour $\b^{-1}(c)$ with $c$.
 In total, the number of steps we use is $(y_0+y_1)+p+q+p+(y_0+y_2)=2p+q+y+y_0$.
 \end{claimproof}

 With regards to the 3 cases in the theorem,
 we have 3 possibilities for $q$: (a) $q\ge kp$, (b) $kp > q > \lceil k/2\rceil p$, and (c) $\lceil k/2\rceil p \ge q$.
 We also have 3 possibilities for $q-y$: (i) $q-y\ge (k-1)p$, (ii) $(k-1)p > q-y > \lceil (k-1)/2\rceil p$, and 
 (iii) $\lceil (k-1)/2\rceil p \ge q-y$. These possibilities for $q$ and $q-y$ combine below to give us 9 cases: (a.i) through (c.iii), although we handle (a.i) and (b.i) together, and also handle (c.i) and (c.ii) together.

 \begin{itemize}
 \item
 \textbf{(a.i) and (b.i): $\bm{q > \ceilfrac {k}{2}p }$ and $\bm{q-y \ge (k-1)p}$.}
 Adding $(k-2)$ times~\eqref{eq:constr2} to~\eqref{eq:constr1}, then dividing by $k-1$ gives
 $$d(\a,\b) - (2p+q) \le \frac1{k-1}\left[(k-2)\left(-y_0+\frac{q-y-p}{k-2}\right)+y+y_0\right] \le \frac{q-p}{k-1}.$$
 If $q\ge kp$, then this gives the desired bound on $d(\a,\b)$. But if $kp > q > \lceil k/2\rceil p$, then
\begin{equation*}
 \frac{q-p}{k-1} = p + \frac{q-kp}{k-1}\le p+\frac{q-kp}{ \floorfrac{k^2}{4}},
\end{equation*}
 since $k-1=\lfloor (4k-4)/4\rfloor \le \lfloor k^2/4\rfloor$.

 \item \textbf{(a.ii): $\bm{q\ge kp}$ and $\bm{(k-1)p> q-y > \ceilfrac {k-1}2 p}$.}
From~\eqref{eq:constr2}, we have
 $$d(\a,\b) - (2p+q) \le p-y_0+\frac{q-y-(k-1)p}{\floorfrac{(k-1)^2}4}<p
 = \frac{kp-p}{k-1} \le \frac{q-p}{k-1}.$$ 

 \item \textbf{(a.iii): $\bm{q\ge kp}$ and $\bm{\ceilfrac {k-1}2 p \ge q-y}$.}
 From~\eqref{eq:constr2}, we have
 \begin{align*} 
 d(\a,\b)-(2p+q)&\le -y+\frac{q-y-p}{\lceil (k-1)/2\rceil}+y-y_0 \le \frac{\lceil(k-1)/2\rceil p-p}{\lceil(k-1)/2\rceil}\\
 &= p - \frac{p}{\lceil(k-1)/2\rceil} \le p - \frac{p}{k-1} = \frac{pk-p-p}{k-1} \le \frac{q-p}{k-1}.
 \end{align*}

 \item \textbf{(b.ii) $\bm{kp > q > \ceilfrac {k}{2} p}$ and $\bm{(k-1)p > q-y > \ceilfrac {k-1}2 p }$.}

Adding $\floorfrac{(k-1)^2}{4}$ times~\eqref{eq:constr2} to~\eqref{eq:constr1}, then dividing by 
 $\floorfrac{(k-1)^2}{4}+1\le \floorfrac{k^2}{4}$, gives
 \begin{align*}
 d(\a,\b)-(2p+q) &\le \left(\floorfrac{(k-1)^2}{4}\left(p-y+\frac{q-y-(k-1)p}{\lfloor (k-1)^2/4\rfloor}+y-y_0\right)+y+y_0\right)/\left(\floorfrac{(k-1)^2}{4}+1\right)\\
 &\le \left(p\left(\floorfrac{(k-1)^2}{4}+1\right)+q-kp\right)/\left(\floorfrac{(k-1)^2}{4}+1\right) \le p + \frac{q-kp}{\floorfrac{k^2}{4}}.
 \end{align*}

 \item \textbf{(b.iii): $\bm{kp > q > \ceilfrac {k}{2} p}$ and $\bm{\ceilfrac{k-1}2 p\ge q-y}$.}
From \eqref{eq:constr2}, we have 
\begin{align*}
 d(\a,\b) - (2p+q) & \le -y_0+\frac{q-y-p}{\ceilfrac {k-1}2 } 
 \le \frac{\ceilfrac{k-1}2 p - p}{\ceilfrac {k-1}2 } \\
 &= p - \frac{p}{\floorfrac {k}2 } 
 = p - \frac{p\ceilfrac{k}2}{\floorfrac {k^2}4 } 
 \le p + \frac{q-kp}{ \floorfrac{k^2}{4} }.
\end{align*}
 Here the last equality holds because $\floorfrac k2 \ceilfrac k2 = \floorfrac{k^2}{4}$,
 and the last inequality holds because $q-kp \ge \ceilfrac k2 p - kp = - \floorfrac k2 p\ge -\ceilfrac k2 p$.

 \item \textbf{(c.i) and (c.ii): $\bm{\ceilfrac{k}{2}p\ge q}$ and $\bm{q-y > \ceilfrac{k-1}{2}p}$.}
 This case can only occur when $\ceilfrac{k}2 > \ceilfrac{k-1}2$; that is, when $k$ is odd. 
 So let $r:=(k-1)/2.$
 Since $\ceilfrac{k}{2}< k-1$, we must show that $d(\a,\b) \le 3p+q+\frac{q-kp}{\ceilfrac{k^2}4}$.
 (That is, we must be in (c.ii); in fact, (c.i) is impossible.)

 If $d(\a,\b)-(2p+q)\le \frac{q-p}{r+1}$, then we have 
 \begin{align*}
 d(\a,\b)-(2p+q)\le \frac{q-p}{r+1} 
 &= p + \frac{q-p-p\ceilfrac{k}{2}}{\ceilfrac{k}2}
 = p + \frac{q-p\ceilfrac{k}{2}}{\ceilfrac{k}2}+\frac{-p}{\ceilfrac{k}2}\\
 &\le p + \frac{q-p\ceilfrac{k}{2}}{\ceilfrac{k^2}4} +\frac{-p\floorfrac{k}2}{\floorfrac{k^2}4}
 = p + \frac{q-pk}{\floorfrac{k^2}4}.
 \end{align*}
So instead we assume $d(\a,\b)-(2p+q)> \frac{q-p}{r+1}$. By~\eqref{eq:constr1} and~\eqref{eq:constr2}, respectively, this implies the following.

 \begin{equation}\label{eq:1}
 y+y_0> \frac{q-p}{r+1}
 \end{equation}
 \begin{equation}\label{eq:2}
 p-y_0+\frac{q-y-2rp}{r^2}> \frac{q-p}{r+1}
 \end{equation}

 (If we are in (c.ii), then \eqref{eq:2} follows directly from the induction hypothesis. But if we are in (c.i), then we note that the left side of \eqref{eq:2} is greater than the upper bound guaranteed by the induction hypothesis; this can be verified by a bit of algebra, which we omit.) 
 Since we are in (c.i) or (c.ii), we have $pr=p\ceilfrac{k-1}2<q-y=q-(y+y_0)+y_0 < q-\frac{q-p}{r+1} +y_0= \frac{rq+p}{r+1}+y_0$; 
 equivalently,
 \begin{equation}\label{eq:3}
 (r^2+r-1)p<rq+(r+1)y_0.
 \end{equation}

 Adding $r^2$ times~\eqref{eq:2} to~\eqref{eq:1} gives
 \begin{align*}
 r^2 p -(r^2-1)y_0 +q-2rp &\ge \frac{r^2+1}{r+1} (q-p)\\ 
 r^3p+r^2p-(r^2-1)(r+1)y_0+qr+q-2r^2p-2rp &\ge (r^2+1)q-(r^2+1)p\\
 (r^3-2r+1)p &\ge (r^2-r)q+(r^2-1)(r+1)y_0\\
 (r^2+r-1)p &\ge rq +(r+1)^2 y_0.
 \end{align*}
 But this final inequality contradicts~\eqref{eq:3}, which finishes the case.

 \item \textbf{(c.iii) $\bm{\ceilfrac{k}{2}p\ge q}$ and $\bm{\ceilfrac{k-1}{2}p\ge q-y}$.}
 If $k$ is even, then $\ceilfrac{k-1}{2}=\ceilfrac{k}{2}$, so \eqref{eq:constr2} gives
 $$d(\a,\b) -(2p+q) \le -y + \frac{q-y-p}{\ceilfrac {k-1}2} + y - y_0
 \le \frac{q-p}{\ceilfrac {k}2}.$$

 If $k$ is odd, then adding $\frac{k-1}{2}$ times~\eqref{eq:constr2} to~\eqref{eq:constr1}, and dividing by
 $\frac{k+1}2$, gives
 \begin{align*}
 d(\a,\b)-(2p+q) &\le \frac{1}{\frac{k+1}2} \left(\left(\frac{k-1}2\right)\frac{q-y-p}{\frac{k-1}2} -\frac{k-1}{2}y_0+(y+y_0)\right)\\
 & \le \frac{q-p}{\ceilfrac{k}2}-\frac{\frac{k-3}2}{\frac{k+1}2}y_0 \le \frac{q-p}{\ceilfrac{k}2}.
 \end{align*}
\end{itemize}
This finishes the last of the 9 cases in the induction step, and thus concludes the proof. \qedhere
\end{proof}

\section{Non-relationships between \texorpdfstring{$\C_k(G)$}{Ck(G)}, \texorpdfstring{$\C_{k+1}(G)$}{Ckplus1(G)}, and \texorpdfstring{$\C_L(G)$}{CL(G)}}\label{sec:particularbehaviour}
\label{surprise-sec}

In~\cite{BFHR22}, the authors sketched some typical behaviour of configuration graphs.
In particular, they mentioned that by increasing the number of colours, the recolouring sequences typically become shorter and shorter.
Note that two factors are at play.
If the number of colours, $k$, increases, then this also increases both the total number of colourings and the number of possible local changes, the degree of the reconfiguration graph.
The first factor often leads to a higher diameter, but the second to a lower diameter.
Here, we give some exceptions to the typical behaviour.
A classical example from~\cite[Prop.~2]{CvdHJ08} is the following.
\begin{ex}
\label{Kmm-M:ex}
 If $G=K_{m,m} \backslash M$ ($K_{m,m}$ minus a perfect matching), then $\C_k(G)$ is connected if and only if $k \ge 3$ and $k \not=m$.
\end{ex}

The key idea in \cref{Kmm-M:ex} is that if $k=m$, then $G$ has a frozen colouring, namely, the one that uses each colour once on each part; see~\cref{fig:L5_FrozenColouring}. (Otherwise, by pigeonhole principle each part has a colour that is used either 0 times or at least 2 times.)
\Cref{Kmm-M:ex} is the inspiration for more counterexamples.
Bard~\cite{Bard2014, Mutze23} asked whether $\C_k(G)$ being Hamiltonian implies that $\C_{k+1}(G)$ is Hamiltonian.
In~\url{https://github.com/StijnCambie/reconfiguration} (document $\texttt{Check\_Ham}$), it is e.g.~verified that if $G=K_{5,5} \backslash M$ then the reconfiguration graph $\C_3(G)$ is Hamiltonian.
Since $\C_5(G)$ is disconnected and thus not Hamiltonian, 
this answers the question of Bard negatively (either for $k=3$ or for $k=4$, but it is not clear which one).  This counterexample was mentioned in~\cite[Sec.~9.6]{Mutze23}, attributed to the first author of the present paper, but no proof was provided; so the code referenced above provides this proof.

Our next example modifies
$K_{4,4} \backslash M$, 
by replacing one vertex with $3$ (each of which
inherits exactly $2$ of its $3$ initial neighbours ). In each of the $4$-colourings shown (see the right of~\cref{fig:C3G<C4G}), the colours on many of the vertices are frozen.
Not surprisingly, the two 4-colourings are far apart.

\begin{ex}\label{ex:C3vsC4}
 The graph $G$ presented in~\cref{fig:C3G<C4G} is bipartite with order $10$, matching number $4$, and degeneracy $\delta^\star(G)=2.$ 
 For $k \ge 3$, $\C_k(G)$ is connected.
 It satisfies $\diam \C_3(G)=15$ and $\diam \C_4(G)=17,$ thus $\diam \C_k(G) < \diam \C_{k+1}(G)$ for $k=3.$
 So $G$ is a graph for which an additional colour can increase the diameter of the recolouring graph.
 Furthermore, $\rad \C_3(G)=15$ and $\rad \C_4(G)=14.$
\end{ex}

We verify all of these numbers in~\url{https://github.com/StijnCambie/reconfiguration} (document \texttt{NonMonotoneC3vsC4}). 
As we know from~\cite[Cor.~14]{CCC24}, for $k$ sufficiently large, the diameter of the reconfiguration graph stabilises at the minimum possible bound (this is the graph's order plus its matching number).
The graph in~\cref{ex:C3vsC4} illustrates that for small $k$, this behaviour can be harder to predict. 
By simply taking many disjoint copies of it, we obtain infinitely many graphs $G$ with $\frac{\diam \C_{k+1}(G)}{\diam \C_{k}(G)} = 17/ 15$.
We do not know if this ratio can become arbitrarily large, while staying finite.

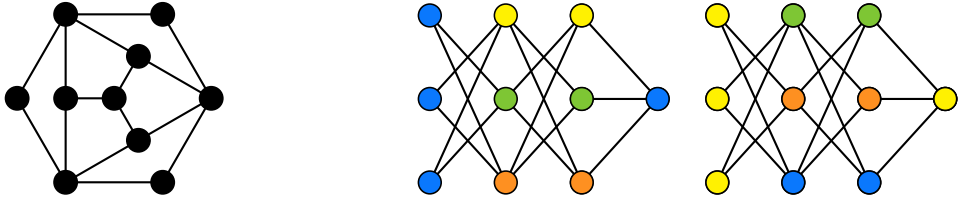
\begin{figure}[ht]
 \centering

\begin{tikzpicture}[ultra thick, scale=0.86]
\def\mydiam{0.18}
\foreach \x in {0,60,...,300}{
\draw[semithick, fill] (\x:1.5) circle (\mydiam);
\draw[thick] (\x:1.5) --(\x+60:1.5) ;
}
\foreach \x in {0,120,240}{
\draw[thick] (0,0)--(\x+60:0.75) --(\x:1.5)--(\x-60:0.75) ;
\draw[semithick, fill] (\x+60:0.75) circle (\mydiam);
}
\draw[semithick, fill] (0,0) circle (\mydiam);

\end{tikzpicture} \quad\quad\quad\quad\quad\quad\quad
 \begin{tikzpicture}[scale=1.01,rotate=90]
\foreach \x/\y in {0/0,-1.1/1,0/1,1.1/1}{
\draw[fill] (\x,\y) circle (0.15);
\draw[thick] (0,0)--(\x,\y);
}

\foreach \x/\y in {-1.1/0,0/1.1,1.1/-1.1,-1.1/1.1,0/-1.1,1.1/0}{
\draw[thick] (\x,1)--(\y,2);
\draw[thick] (\y,2)--(\x,3);
}
\foreach \x in {1,2}{
 \draw[semithick, fill=red] (-1.1,\x) circle (0.15);
 \draw[semithick, fill=green] (0,\x) circle (0.15);
 \draw[semithick, fill=yellow] (1.1,\x) circle (0.15);
}
\foreach \x in {-1.1,0,1.1}{
\draw[semithick, fill=blue] (\x,3) circle (0.15);
}
\draw[semithick, fill=blue] (0,0) circle (0.15);
\end{tikzpicture}
\quad
\begin{tikzpicture}[scale=1.01,rotate=90]
\foreach \x/\y in {0/0,-1.1/1,0/1,1.1/1}{
\draw[semithick, fill] (\x,\y) circle (0.15);
\draw[thick] (0,0)--(\x,\y);
}\draw[semithick, fill] (0,0) circle (0.15);

\foreach \x in {-1.1,0,1.1}{
\draw[semithick, fill] (\x,2) circle (0.15);
\draw[semithick, fill] (\x,3) circle (0.15);
}

\foreach \x/\y in {-1.1/0,0/1.1,1.1/-1.1,-1.1/1.1,0/-1.1,1.1/0}{
\draw[thick] (\x,1)--(\y,2);
\draw[thick] (\y,2)--(\x,3);
}

\foreach \x in {1,2}{
 \draw[semithick, fill=blue] (-1.1,\x) circle (0.15);
 \draw[semithick, fill=red] (0,\x) circle (0.15);
 \draw[semithick, fill=green] (1.1,\x) circle (0.15);
}
\foreach \x in {-1.1,0,1.1}{
\draw[semithick, fill=yellow] (\x,3) circle (0.15);
}
\draw[semithick, fill=yellow] (0,0) circle (0.15);
\end{tikzpicture}
\captionsetup{width=.85\textwidth}
 \caption{Left: A graph $G$ for which $\diam \C_3(G)<\diam \C_4(G)$. Right: Two [4]-colourings of $G$ (drawn differently from the figure on the left) that in $\C_4(G)$ lie at distance $17$.} \label{fig:C3G<C4G}
\end{figure}

\begin{ques}
Fix an integer $k\geq 3$.
Does there exist a sequence of graphs $(G_i)_{i \in \mathbb{N}}$ such that $\diam \C_{k+1}(G_i) < \infty $ for all $i$, but $ \lim_{i \rightarrow \infty}\frac{\diam \C_{k+1}(G_i)}{\diam \C_{k}(G_i)}=\infty$? 
\end{ques}

Interestingly, we were unable to find an analogue of~\cref{ex:C3vsC4} for radius.
An affirmative answer to the following question would imply that the ratio $\frac{\diam \C_{k+1}(G)}{\diam \C_{k}(G)}$ is actually bounded by 2, whenever it is well-defined.

\begin{ques}
Fix an integer $k \ge 3$ and let $G$ be a graph such that $\C_k(G)$ and $\C_{k+1}(G)$ are both connected.
Is it necessarily true that $\rad \C_k(G) \ge \rad \C_{k+1}(G)$?
\end{ques}

Next, we compare the diameter of the $k$-colour reconfiguration graph and the list reconfiguration graph for a $k$-list-assignment. Despite identical lists imposing the most restrictions, we prove that the $k$-colour reconfiguration graph is not extremal; more precisely, for arbitrary $k$-list-assignments $L$, no relationships hold in general between the connectedness (or diameter) of $\C_k(G)$ and that of $\C_{L}(G)$.

\begin{prop}\label{prop:C_k&C_Lunrelated}
 For each of the four statements below, there are examples of graphs $G$, integers $k$, and list-assignments $L$ with $\lvert L(v) \rvert =k$ for every $v \in V(G)$ that satisfy the statement.
 \begin{enumerate}[label=(\alph*)]
 \item $\C_k(G)$ is disconnected, while $\C_L(G)$ is connected
 \item $\diam \C_L(G) < \diam \C_k(G)< \infty$
 \item $\C_L(G)$ is disconnected, while $\C_k(G)$ is connected
 \item $\diam \C_k(G) < \diam \C_L(G)<\infty$
 \end{enumerate}
\end{prop}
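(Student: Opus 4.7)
The plan is to construct explicit small examples for each of (a)--(d), leveraging the frozen-colouring phenomenon of Example~\ref{Kmm-M:ex} (for $K_{m,m}\setminus M$) together with the bipartite diameter bounds of Theorem~\ref{thr:main_diamCk(Kpq)_upp}.

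For part (a), I would take $G = K_{m,m}\setminus M$ with $k = m \ge 3$, so that $\C_k(G)$ is disconnected via the frozen colouring $\alpha(u_i)=\alpha(v_i)=i$. To make $\C_L(G)$ connected, I would choose size-$k$ lists that exclude this frozen colouring but include a common extra colour $k+1$; for example $L(u_i)=L(v_i) = ([k]\setminus\{i\})\cup\{k+1\}$. The extra colour $k+1$ provides enough slack to shuttle between any two $L$-colourings: one verifies this by a short case analysis showing that every vertex can be temporarily routed through $k+1$, after which its neighbours can be freely recoloured. For part (b), I would take $G = K_{p,q}$ with $p<q$ and any $k\ge 3$; by Theorem~\ref{thr:main_diamCk(Kpq)_upp}, $\diam \C_k(G) > p+q$. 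Assigning disjoint $k$-palettes, $L(u)=[k]$ for $u\in U$ and $L(v)=[k+1,2k]$ for $v\in V$, makes every vertex-to-list assignment automatically proper, so $\diam \C_L(G)=p+q$, which is strictly smaller than $\diam \C_k(G)$.

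For part (d), I would again begin with a bipartite graph and choose overlapping $k$-lists so that some shortest $k$-recolouring sequence passes through a colouring forbidden by $L$, forcing a detour. A natural candidate is to start from a near-worst-case pair of colourings identified in the proof of Theorem~\ref{thr:main_diamCk(Kpq)} and then restrict the intermediate colour used to swap the two sides; with careful design, $\C_L(G)$ remains connected but has strictly larger diameter than $\C_k(G)$.

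Part (c) is the hardest. Note that an $L$-frozen colouring with $|L(v)|=k$ forces $\deg(v)\ge k-1$ at every vertex, and in the extreme case $\deg(v)=k-1$ with rainbow closed neighbourhoods, the same colouring would already be frozen in $\C_k(G)$, making $\C_k(G)$ disconnected too. So (c) cannot be produced by a single isolated vertex in $\C_L$; instead, a non-trivial component of $\C_L(G)$ must be isolated, and the obstruction has to be global rather than local. The main obstacle is producing such a global obstruction while keeping $|L(v)|=k$. A plausible route is a bipartite graph $G$ for which $\C_k(G)$ is connected but a carefully chosen list-assignment $L$ creates ``Kempe-like barriers'' blocking all interpolations between two large families of $L$-colourings. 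Finding a small explicit example may well require computer search, mirroring the verification used for Example~\ref{ex:C3vsC4}.
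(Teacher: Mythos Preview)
Your proposal handles (b) essentially correctly (and similarly to the paper: disjoint lists make $\C_L(G)$ a hypercube-like graph of diameter $n$, which is below $\diam\C_k(K_{p,q})$). For (a), your construction may well work, but you have not actually verified that $\C_L(G)$ is connected, and there is a far simpler route you are overlooking: if all the lists $L(v)$ are pairwise disjoint, then \emph{every} choice of colours is a proper $L$-colouring, so $\C_L(G)$ is trivially connected with diameter $n$. Pairing this with any $G,k$ for which $\C_k(G)$ is disconnected (say $G=K_{m,m}$ and $k=2$) gives (a) instantly; the paper does exactly this.

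The real gaps are (c) and (d), where you offer no construction. For (c), your pessimism is based on a faulty argument: you claim that an $L$-frozen colouring with $\deg(v)=k-1$ and rainbow closed neighbourhoods would also be frozen in $\C_k(G)$. But an $L$-colouring may use colours outside $[k]$, so it need not be a $k$-colouring at all, and hence says nothing about $\C_k(G)$. In fact there is a short explicit example: take $G=K_{m,m}\setminus M$ with $m\ge 4$ and $k=m-1$, and set $L(v_i)=L(w_i)=[m]\setminus\{i\}$. The colouring $\varphi(v_i)=\varphi(w_i)=(i \bmod m)+1$ is a proper $L$-colouring that is frozen (each vertex sees all of $[m]$ except its own colour among its neighbours, and its own list omits the one remaining colour), so $\C_L(G)$ is disconnected; yet $\C_{m-1}(G)$ is connected by Example~\ref{Kmm-M:ex}. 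No computer search is needed.

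For (d) your ``forbid an intermediate colour to force a detour'' heuristic is plausible but is not a proof: you must exhibit a concrete $G$, $k$, and $L$ and then \emph{prove} both that $\C_L(G)$ remains connected and that its diameter strictly exceeds $\diam\C_k(G)$. The paper carries this out with $G=K_{18,18}$, $k=4$, and size-$4$ lists drawn from $[5]$, arguing via a careful counting of ``bad'' recolourings that $\diam\C_L(G)\ge 55>54=\diam\C_4(G)$. Something of comparable concreteness is required here.
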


\begin{proof}
We prove the first two statements together, and the third and fourth thereafter.

 (a,b) The first two statements are true almost trivially. 
 Take an $n$-vertex graph $G$ and a value $k$ for which $\diam \C_k(G)>n.$
 Let the lists $L(v)$, for all $v \in V$, be disjoint; so $\diam \C_L(G)=n$.
 Here we can choose $\diam \C_k(G)$ to be finite or infinite (equivalently $\C_k(G)$ can be connected or disconnected). For example, with $G=K_{m,m}$, when either $k>2$ or $k=2$.
 By instead letting $G:=P_n$ and $k:=3$, by~\cite[Thm.~3]{BJLPP14} we know that $\frac{\diam \C_k(G)}{\diam \C_L(G)}$ can be arbitrarily large (while being finite).

 (c) Fix $m\ge 4$, let $G$ be the graph $K_{m,m} \backslash M$ for a perfect matching $M$, and let $k:=m-1$.
 We denote the vertices in the two parts by $v_1,\ldots,v_m$ and $w_1,\ldots,w_m$, with $v_iw_i\notin E(G)$ for all $i\in[m]$.
 Let $L(v_i)=L(w_i):=[m]\setminus\{i\}$.
 Then there exist $L$-colourings that are frozen in $\C_L(G)$, e.g. $\varphi(v_i)=\varphi(w_i)=(i+1)\bmod m$; see~\cref{fig:L5_FrozenColouring}. As such, $\C_L(G)$ is disconnected; in contrast, $\C_k(G)$ is connected, as proven in~\cite{CvdHJ08} (the intuition behind this final statement is that on each part some colour must appear at least twice, by pigeonhole principle; so we can recolour the whole part with that colour).

\begin{figure}[ht]
 \centering
 \begin{tikzpicture}[rotate=90][line cap=round,line join=round,>=triangle 45,x=1.5cm,y=1.5cm]
\clip(-3.38,-5.24) rectangle (0.34,-0.7);
\draw [line width=2.5pt] (-3,-1)-- (0,-2)
(-3,-1)-- (0,-3)
(-3,-1)-- (0,-4)
(-3,-1)-- (0,-5)
(-3,-2)-- (0,-1)
(-3,-2)-- (0,-3)
(-3,-2)-- (0,-4)
(-3,-2)-- (0,-5)
(-3,-3)-- (0,-1)
(0,-2)-- (-3,-3)
(-3,-3)-- (0,-4)
(-3,-3)-- (0,-5)
(-3,-4)-- (0,-1)
(0,-2)-- (-3,-4)
(-3,-4)-- (0,-3)
(0,-5)-- (-3,-4)
(-3,-5)-- (0,-4)
(-3,-5)-- (0,-3)
(0,-2)-- (-3,-5)
(0,-1)-- (-3,-5);
\begin{scriptsize}
\draw [fill=yellow, line width=1pt] (-3,-1) circle (5pt);
\draw [fill=yellow, line width=1pt] (0,-1) circle (5pt);
\draw [fill=red, line width=1pt] (-3,-3) circle (5pt);
\draw [fill=red, line width=1pt] (0,-3) circle (5pt);
\draw [fill=blue, line width=1pt] (-3,-5) circle (5pt);
\draw [fill=blue, line width=1pt] (0,-5) circle (5pt);
\draw [fill=green, line width=1pt] (-3,-4) circle (5pt);
\draw [fill=green, line width=1pt] (0,-4) circle (5pt);
\draw [fill=purple, line width=1pt] (-3,-2) circle (5pt);
\draw [fill=purple, line width=1pt] (0,-2) circle (5pt);
\end{scriptsize}
\end{tikzpicture}
 \caption{A frozen colouring of $K_{5,5}\backslash M$}
 \label{fig:L5_FrozenColouring}
 \end{figure}
 
 (d) Let $k:=4$, let $G:=K_{18,18}$, and call its parts $U$ and $V$.
 Recall that $54=n(G)+\mu(G)\le \diam \C_4(G)\le 54$, where the upper bound is true by \cref{thr:main_diamCk(Kpq)}.
 So it suffices to construct a 4-assignment $L$ for $G$ such that $\diam \C_L(G) \ge 55$.
 This is what we do.
 
 For every triple $(i,j,\ell)$ with $i \in \{1,3,5\}$, with $j \in \{2,4\}$, and with $\ell \in [5]\setminus\{i,j\}$,
 for unique vertices $u \in U$ and $v \in V$
 we set $L(u)=L(v):=[5]\backslash\{\ell\}$, and 
 define $L$-colourings $\a$ and $\b$ by $\a(u)=i=\b(v)$ and $\b(u)=j=\a(v)$. These colourings are depicted in~\cref{fig:SwitchColoursK18_18}.

 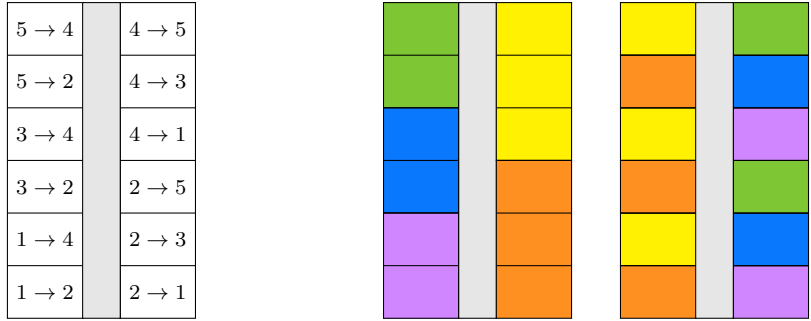
\begin{figure}[ht]
 \centering
\begin{tikzpicture}[yscale=0.7]
 \draw [fill=white] (2.5,0) rectangle (1.5,6);
 \draw [fill=white] (0,0) rectangle (1,6);

\draw [fill=black!10!white] (1.5,6) rectangle (1,0);

\foreach \x in {1,2,3,4,5}{
\draw (0,\x)--(1,\x);
\draw (1.5,\x)--(2.5,\x);
}

\node at (0.5,0.5) {\footnotesize{$1 \to 2$}};
\node at (0.5,1.5) {\footnotesize{$1 \to 4$}};
\node at (0.5,2.5) {\footnotesize{$3 \to 2$}};
\node at (0.5,3.5) {\footnotesize{$3 \to 4$}};
\node at (0.5,4.5) {\footnotesize{$5 \to 2$}};
\node at (0.5,5.5) {\footnotesize{$5 \to 4$}};

\node at (2,0.5) {\footnotesize{$2 \to 1$}};
\node at (2,1.5) {\footnotesize{$2 \to 3$}};
\node at (2,2.5) {\footnotesize{$2 \to 5$}};
\node at (2,3.5) {\footnotesize{$4 \to 1$}};
\node at (2,4.5) {\footnotesize{$4 \to 3$}};
\node at (2,5.5) {\footnotesize{$4 \to 5$}};

\begin{scope}[xshift=5cm]
 \draw [fill=green] (0,4) rectangle (1,6);
 \draw [fill=blue] (0,2) rectangle (1,4);
 \draw [fill=purple] (0,0) rectangle (1,2);
\draw [fill=black!10!white] (1.5,6) rectangle (1,0);
 \draw [fill=yellow] (2.5,3) rectangle (1.5,6);
 \draw [fill=red] (2.5,0) rectangle (1.5,3);

\foreach \x in {1,2,3,4,5}{
\draw (0,\x)--(1,\x);
\draw (1.5,\x)--(2.5,\x);
}
\end{scope}

\begin{scope}[xshift=8.15cm]
\foreach \x in {1,3,5}{
 \draw [fill=yellow] (0,\x) rectangle (1,\x+1);
 \draw [fill=red] (0,\x) rectangle (1,\x-1);
}
\foreach \x in {0,3}{
 \draw [fill=purple] (2.5,\x) rectangle (1.5,\x+1);
 \draw [fill=blue] (2.5,\x+2) rectangle (1.5,\x+1);
  \draw [fill=green] (2.5,\x+2) rectangle (1.5,\x+3);
  }
\draw [fill=black!10!white] (1.5,6) rectangle (1,0);

\foreach \x in {1,2,3,4,5}{
\draw (0,\x)--(1,\x);
\draw (1.5,\x)--(2.5,\x);
}

\end{scope}

\end{tikzpicture}
\captionsetup{width=.85\textwidth}
\caption{Initial and final $L$-colouring of $K_{18,18}$ where every ``box" represents $3$ vertices (having different lists), which requires at least $55$ recolourings. Left: These colourings are shown as $\a(v) \to \b(v)$. Right: These colourings are shown with explicit colours.}
 \label{fig:SwitchColoursK18_18}
\end{figure}
 
 Note that due to the choice of lists, we cannot colour either part with only a single colour.
 So each part must always use at least $2$ colours; so $4$ colours in total.
 This implies that we can only move one colour at a time between $U$ and $V$.
 When recolouring $\a$ into $\b$, 
 we may assume, by possibly renaming colours, that 1 is the first colour completely removed from its part (which must be $U$), followed next by 2 (removed from $V$), and followed next by 3 (removed from $U$).
 (Note that completely removing a colour from a part and then immediately again using that colour on that part will always be inefficient.) 
 While removing colours 1, 2, 3 from their parts, we need to recolour some vertices without giving each of them its final colour; we call each instance of this a \emph{bad} recolouring.
 We will prove that, to recolour $\a$ to $\b$, we need at least $19$ bad recolourings. This will finish the proof, since then the total number of recolourings we use to get from $\a$ to $\b$ is at least $19+|U|+|V|=19+36=55$, as desired.
 
 We start by recolouring vertices (in $U$) initially coloured $1$ with either $3$ or $5$;
 we do this for at least 6 vertices, so begin with at least 6 bad recolourings. 
 Next we remove colour $2$ from $V$. Initially, $2$ is used on 9 vertices of $V$; since 6 of these are coloured with $3$ or $5$ in $\b$, we perform at least 6 more bad recolourings.
 Initially, 6 vertices of $U$ are coloured with $3$. In $\b$, we use colour $4$ on exactly 3 of these; but colour $4$ is not yet available for use on $U$, so we need 3 more bad recolourings. In addition,
 consider the vertex $u\in U$ that corresponds to triple $(1,4,5)$; that is $\a(u)=1$, $\b(u)=4$, and $L(u)=[5]\setminus\{5\}=[4]$. When $1$ was removed from $U$, vertex $u$ must have been recoloured with $3$. So now that colour $3$ is
 being removed, we need another bad recolouring.

 When we next completely remove a colour from 
 $V$, it is either $1$ or $4$. If it $4$, then 
 we need at least 3 more bad recolourings, since
 colour $5$ is not yet available to use on $V$.
 But if we instead remove $1$, then we will also
 incur at least 3 more bad recolourings (the vertices in $V$ with $\a(v)=2, \b(v)=1$ all have been recoloured badly as well).
 Thus, the total number of bad recolourings is at least $6+6+3+1+3 = 19$, as desired.
\end{proof}

The following result, combined with \cref{extension:rem},
extends~\cref{prop:C_k&C_Lunrelated}(d), exhibiting an even larger (unbounded) gap between $\diam \C_L(G)$ and $\diam \C_k(G)$.

\begin{lem}
For all $n\geq 1$ there exists an $n$-vertex graph $G_n$ with a $4$-fold list-assignment $L$, such that $c_1 \cdot n \leq \frac{\diam \C_L (G_n)}{ \diam \C_4(G_n)} \leq c_2 \cdot n$, where $c_1$ and $c_2$ are some uniform positive constants. 
\label{path-plus-lem}
\end{lem}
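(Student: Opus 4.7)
The plan is to construct an explicit family $\{G_n\}$ of graphs on $n$ vertices, equipped with $4$-fold list-assignments $L$, such that $\diam\C_L(G_n)=\Theta(n^2)$ while $\diam\C_4(G_n)=\Theta(n)$, yielding the desired ratio of $\Theta(n)$.

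First, I would take $G_n$ to be a complete bipartite graph: $G_n := K_{p,p}$ with $p = \lfloor n/2 \rfloor$. The denominator is then immediate from~\cref{thr:main_diamCk(Kpq)}, which gives $\diam\C_4(G_n)=\Theta(n)$.

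Second, I would design a $4$-fold list-assignment $L$ by generalising the $5$-colour system from the proof of~\cref{prop:C_k&C_Lunrelated}(d). The $K_{18,18}$ construction uses lists of the form $[5]\setminus\{\ell\}$ assigned via triples $(i,j,\ell)$; an analogue with $\Theta(n)$ vertices per side and replicated list patterns should simultaneously (a) force $\Theta(n)$ distinct colour-erasure phases during any reconfiguration, and (b) force $\Theta(n)$ bad recolourings per phase, because the list restrictions demand that each side use at least two colours at every step.

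Third, for the lower bound $\diam\C_L(G_n)=\Omega(n^2)$, I would choose extremal colourings $\alpha,\beta$ analogous to those in~\cref{fig:SwitchColoursK18_18} and extend the ``bad recolouring'' counting argument from~\cref{prop:C_k&C_Lunrelated}(d). The key step is to show that across the $\Theta(n)$ colour-erasure phases, each phase recolours $\Theta(n)$ vertices to non-target colours. For the matching upper bound $\diam\C_L(G_n)=O(n^2)$, I would exhibit a reconfiguration strategy that proceeds phase-by-phase, erasing one colour at a time and using $O(n)$ auxiliary recolourings per phase, totalling $O(n^2)$ steps.

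The main obstacle is the lower bound. In the proof of~\cref{prop:C_k&C_Lunrelated}(d), the excess bad recolourings totalled only a constant (namely $19$), obtained through a small case analysis. Here, one needs $\Omega(n^2)$ bad recolourings uniformly across \emph{all} reconfiguration sequences. Achieving this requires designing the lists so that no short-cut can reintroduce an erased colour to bypass a full phase; verifying this rigorously will likely demand a charging argument that tracks, for each reintroduced colour, the future bad recolourings it induces. A secondary technical point is ensuring the asymptotic constants $c_1,c_2$ are genuinely uniform in $n$; this should follow from the homogeneity of the construction, but merits explicit check once the main counting argument is in place.
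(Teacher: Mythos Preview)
Your proposal has a genuine gap in the lower bound, and the obstacle you flag is not merely technical but structural. In the $K_{18,18}$ argument the excess over the trivial bound was a constant ($19$), obtained from a fixed palette of $5$ colours. To reach $\Omega(n^2)$ bad recolourings on $K_{p,p}$ via colour-erasure phases you would need $\Theta(n)$ phases each costing $\Theta(n)$; but with $4$-lists these two requirements pull against each other. If the total palette has $O(1)$ colours (as ``replicated list patterns'' suggests), then there are only $O(1)$ erasure events unless colours are reintroduced, and you explicitly want to forbid reintroduction. If instead the palette has $\Theta(n)$ colours, then on average each colour lies in only $O(1)$ lists, so erasing it from a side touches only $O(1)$ vertices and forces only $O(1)$ bad recolourings. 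Either way the product is $O(n)$, not $\Omega(n^2)$. You have not indicated any mechanism that escapes this dichotomy, and I do not see one on a single $K_{p,p}$.

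The paper takes a completely different route: rather than engineering many expensive phases directly, it builds $G$ from a path $P_k$ with a copy of $K_{4,255}$ hung off each path vertex. The lists on each $K_{4,255}$ are chosen so that one designated ``special'' vertex is frozen at colour~$1$ in every $L$-colouring; giving each path vertex the list $\{1,a,b,c\}$ then forces the path to be $L$-coloured from $\{a,b,c\}$ only. Thus $L$-recolouring of $G$ restricted to the path \emph{is} $3$-recolouring of $P_k$, and the quadratic lower bound is imported wholesale from the known result $\diam\C_3(P_k)=\Theta(k^2)$ of Bonamy et al. The linear upper bound on $\diam\C_4(G)$ follows because with four unrestricted colours the path has linear recolouring diameter and each pendant $K_{4,255}$ can be fixed in $O(1)$ steps. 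The key idea you are missing is this reduction-by-gadget to an existing quadratic lower bound, rather than a direct counting argument on a complete bipartite host.
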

\begin{proof}
For each positive integer $k$, we construct a graph $G_k$
with $|V(G_k)|=260k$ and a $4$-assignment $L$ such that
$(k^2-1)/4+255k\le \diam \C_L(G_k)\le 2k^2+255k$ and $260k\le \diam \C_4(G_k)\le 691k$. We use these graphs $G_k$ to prove the result when $n=260k$ for some positive integer $k$. It is straightforward (but a bit tedious) to extend the result to all positive integers $n$; so we just sketch the ideas. 
In general when $n\ge 260$, we pick an integer $k$ such that $260k\le n < 260 (k+1)$, and we take our graph $G$ to be an $n$-vertex subgraph of $G_{k+1}$ that contains as an induced subgraph $G_k$. Now $n/2\le 260k \le n < 260 (k+1) \le 2n$, so by applying the bounds for $G_k$ and $G_{k+1}$, we lose at most a constant factor in both the upper and lower bounds.

The most interesting part of our construction is the lower bound on $\diam\C_L(G_k)$, so we begin with that. We start with a path of order $k$ and ``hang off'' of each path vertex a copy of $K_{4,255}$ that uses its lists to enforce that the
recolouring of the path simulates the recolouring of
a path with only 3 colours (and no lists).
Since~\cite{BJLPP14} showed that $(k^2-1)/4\le \diam \C_3(P_k)\le 2k^2$, we are done. Thus, we need only to describe these
lists and show how they enforce the desired simulation
of $\C_3(P_k)$.

Let $H$ be the complete bipartite graph $K_{4,255}$. To the four vertices $v_1,v_2,v_3,v_4$ in the part of size $4$, assign four disjoint lists: $L(v_1)=\{1,2,3,4\},L(v_2)=\{5,6,7,8\},L(v_3)=\{9,10,11,12\},L(v_4)=\{13,14,15,16\}$. In the part of size $255=4^4-1$, assign each vertex a unique list that contains precisely one element from each of $L(v_1),L(v_2),L(v_3),L(v_4)$, except that no vertex is assigned $\{1,5,9,13\}$. Note that in every proper $L$-colouring $c$ of $H$, we must have $c(v_1)=1,c(v_2)=5,c(v_3)=9$ and $c(v_4)=13$. From this it follows that $\C_L(H)$ is connected, as any two $L$-colourings only differ on the part of size $255$ (since that is an independent set, in fact the diameter of $\C_L(H)$ is $255$). In what follows we call $v_1$ the \emph{special vertex} of $H$ and we only need that it is coloured $1$ in every $L$-colouring. 

Now consider a path $P_k=p_1,\ldots, p_k$ on $k$ vertices. Let $G_k$ denote the graph formed by taking the disjoint union of $P_k$ and $k$ copies $H_1,\ldots, H_k$ of $H$, and then for every $i\in[k]$ adding an edge between $p_i$ and the special vertex of $H_i$.

Here we tacitly assumed that each $H_i$ is equipped with the list-assignment $L$ described above. Moreover, we assign each vertex of $P$ the list $\{1,a,b,c\}$, where $\{a,b,c\}$ is disjoint from $\{1,2,\ldots, 16\}$. Since in a proper $L$-colouring each special vertex must have colour $1$, it follows that each vertex of $P_k$ must use a colour in $\{a,b,c\}$. In other words, restricted to $P$ the $L$-colourings correspond precisely to the $3$-colourings of $P$.
As noted above, by~\cite[Thm.~3]{BJLPP14}, we know that $(k^2-1)/4\le \diam \C_3(P_k)\le 2k^2$. 

It follows that $\diam \C_L(G_k) = \diam \C_L(P_k) + k \cdot\diam \C_L(H) = \diam \C_3(P_k) + 255 k= \Theta(k^2)$, because any two $L$-colourings of $G_k$ only differ on $P_k$ and the size-$255$ parts of the copies of $H$, taking note that these parts are separated from each other by the special vertices that have the same colour in every $L$-colouring. \\

Now we must prove that $\diam \C_4(G_k) = \Theta(k)$. 
The lower bound is trivial, since $|V(G_k)|=(255+4+1)k$ and permuting the colors (with no fixed points) in an arbitrary colouring $\a$ gives a colouring $\b$ with $d(\a,\b)\ge |V(G_k)|=260k$.
For the upper bound, let $\alpha, \beta$ be two $4$-colourings of $G_k$, and let $\alpha(P_k), \beta(P_k)$ be their restriction to $P_k$. As $4=\Delta(P_k)+2$, it follows (e.g. see \cite[Thm.~1]{CCC24}) that $\diam \C_4(P_k) \leq 2k$. Thus there is a sequence $\mathcal{S}$ of at most $2k$ vertex recolourings that transforms $\alpha(P_k)$ into $\beta(P_k)$ while maintaining a proper colouring of $P_k$, but not necessarily of $G_k$. 
Consider an arbitrary step of $\mathcal{S}$ where we need to recolour some vertex $p_i$ of $P_k$, without loss of generality from colour $1$ to colour $2$. This recolouring step is also allowed in $G_k$ unless the special vertex $v(H_i)$ of $H_i$ has colour $2$ as well. In this case, we first recolour $v(H_i)$ to either $3$ or $4$. 
This can be done in one step if either $3$ or $4$ does not appear in the size-$255$ part $B_i$ of $H_i$. Otherwise, we recolour $B_i$ to remove either $3$ or $4$ from $B_i$, using at most $\lfloor 255/2\rfloor=127$ steps, and then recolour $v(H_i)$ to either $3$ or $4$. In total we need at most $128$ extra recolourings to accomodate the recolouring of $p_i$. 
Doing this for every step of $\mathcal{S}$, we need at most $129\cdot 2k$ steps to recolour $\alpha$ in such a way that it agrees with $\beta$ on the vertices of $P_k$. 

After recolouring $P_k$, we must finally recolour each copy of $H$, which can be done in at most $433$ 
steps per copy. (This needs a small argument; it does not follow directly from~\cref{thr:main_diamCk(Kpq)} because throughout the colour $\beta(p_i)$ is forbidden at the special vertex of $H_i$.) Here is one way to prove this upper bound. 
Let $\widehat{\a}$ denote the current colouring. For some colour $c\in L(v_1)$, we first remove $c$ from part $B_i$ (using $\b(w)$ for each $w\in B_i$ when possible, and using some other colour in $\b(B_i)$ when needed); then we recolour $\{v_1,v_2,v_3,v_4\}$ with $c$; then we recolour each $w\in B_i\setminus \b^{-1}(c)$ with $\b(w)$; then we recolour each $v_i$ with $\b(v_i)$; and finally we recolour each $w\in \b^{-1}(c)$ with $c$. We choose colour $c$ to minimise $|\widehat{\a}^{-1}(c)\cap V|+|\b^{-1}(c)\cap V|$. Thus, the number of recolouring steps is at most $|\widehat{\a}^{-1}(c)\cap V|+4+|B_i|+4+|\b^{-1}(c)\cap V|=255+8+|\widehat{\a}^{-1}(c)\cap V|+|\b^{-1}(c)\cap V|\le 263+\lfloor 255\cdot 2/3\rfloor =263+170=433$.
Thus, in total the distance between $\alpha$ and $\beta$ is at most $129\cdot 2k + 433k = 691k$.
\end{proof}

\begin{remark}
\label{extension:rem}
 The construction above can be generalized to $k$-colourings and $k$-fold list-assignments for any $k\geq 4$, by replacing the graph $H$ with a larger complete bipartite graph, and replacing the path with a $(k-2)$-colourable $n$-vertex graph with $(k-1)$-colour
diameter $\Theta(n^2)$ and $k$-colour diameter $\Theta(n)$, the existence of which is guaranteed by constructions in~\cite{BJLPP14}.
\end{remark} 

\paragraph{Open access statement.} For the purpose of open access,
a CC BY public copyright license is applied
to any Author Accepted Manuscript (AAM)
arising from this submission.

\bibliographystyle{habbrv}
{\small
\bibliography{reconfiguration}
}

\end{document}